\documentclass[a4paper,12pt,leqno]{amsart}
\usepackage{amsmath,amssymb,mathrsfs}
\usepackage[matrix,arrow]{xy} 
\usepackage{xcolor,hyperref}
\hypersetup{colorlinks=true,citecolor=black,linkcolor=black}
\newcommand{\h}{\hbox}
\newcommand{\q}{\quad}
\newcommand{\nin}{\noindent}

\newcommand{\ms}{\par\medskip}
\newcommand{\sk}{\par\smallskip}

\newcommand{\msn}{\par\medskip\noindent}

\newcommand{\ges}{\geqslant}
\newcommand{\les}{\leqslant}
\newcommand{\one}{\hskip1pt}
\newcommand{\mcap}{\hbox{$\bigcap$}}
\newcommand{\mcup}{\hbox{$\bigcup$}}
\newcommand{\msum}{\hbox{$\sum$}}
\newcommand{\mopl}{\hbox{$\bigoplus$}}
\newcommand{\mprod}{\hbox{$\prod$}}
\newcommand{\B}{{\mathcal B}}
\newcommand{\D}{{\mathcal D}}

\newcommand{\OO}{{\mathcal O}}

\newcommand{\PP}{{\mathbb P}}
\newcommand{\Q}{{\mathbb Q}}
\newcommand{\C}{{\mathbb C}}
\newcommand{\N}{{\mathbb N}}
\newcommand{\R}{{\mathbb R}}

\newcommand{\Z}{{\mathbb Z}}

\newcommand{\Dt}{\widetilde{D}}

\newcommand{\Yt}{\widetilde{Y}}

\newcommand{\Gp}{\Gamma_{\!+}}
\newcommand{\al}{\alpha}
\newcommand{\be}{\beta}
\newcommand{\ga}{\gamma}
\newcommand{\Ga}{\Gamma}
\newcommand{\de}{\delta}

\newcommand{\la}{\lambda}

\newcommand{\si}{\sigma}

\newcommand{\om}{\omega}

\newcommand{\dd}{\partial}
\newcommand{\ddd}{{\rm d}}
\newcommand{\Gr}{{\rm Gr}}

\newcommand{\tos}{\,{\to}\,}
\newcommand{\eq}{\,{=}\,}
\newcommand{\defs}{\,{:=}\,}
\newcommand{\nes}{\,{\ne}\,}
\newcommand{\ins}{\,{\in}\,}

\newcommand{\sst}{\,{\subset}\,}
\newcommand{\stm}{\,{\setminus}\,}
\newcommand{\gess}{\,{\ges}\,}
\newcommand{\less}{\,{\les}\,}
\newcommand{\sgt}{\,{>}\,}
\newcommand{\slt}{\,{<}\,}
\newcommand{\col}{\,{:}\,}
\newcommand{\pl}{\one {+}\one}
\newcommand{\mi}{\one {-}\one}
\newcommand{\bl}{\bigl}
\newcommand{\br}{\bigr}

\newcommand{\ssb}{\raise.15ex\h{${\scriptscriptstyle\bullet}$}}
\newcommand{\ssc}{\,\raise.15ex\h{${\scriptstyle\circ}$}\,}

\newcommand{\simto}{\,\,\rlap{\hskip1.5mm\raise1.4mm\hbox{$\sim$}}\hbox{$\longrightarrow$}\,\,}

\makeatletter
\renewcommand\section{\@startsection{section}{1}{0pt}{-3ex plus -1ex minus -.2ex}{2.3ex plus.2ex}{\centering\normalfont\bfseries}}
\makeatother
\theoremstyle{plain}
\newtheorem{thm}{Theorem}[section]

\newtheorem{prop}[thm]{Proposition}
\newtheorem{lem}[thm]{Lemma}
\newtheorem{ithm}{Theorem}
\newtheorem{icor}{Corollary}

\newtheorem{ilem}{Lemma}

\theoremstyle{definition}
\newtheorem{rem}[thm]{Remark}

\newtheorem{irem}{Remark}

\begin{document}
\title[Strong monodromy conjecture]{Strong monodromy conjecture for defining polynomials of projective hypersurfaces having\\only weighted homogeneous isolated singularities}
\author[M. Saito]{Morihiko Saito}
\address{M. Saito : RIMS Kyoto University, Kyoto 606-8502 Japan}
\begin{abstract} Let $Z\subset{\bf P}^{n-1}$ be a hypersurface such that the associated reduced hypersurface $Z_{\rm red}$ has only weighted homogeneous isolated singularities. In the case $Z$ is a reduced curve or $Z_{\rm red}$ has only homogeneous isolated singularities with $n$ at least $4$, we show that the strong monodromy conjecture for a defining polynomial $f$ of $Z$ follows from arxiv:1609.04801v1 using in the reduced curve case a formula of Denef and Loeser for Newton-nondegenerate polynomials of three variables (which can be deduced in the applied case from the one for the two variable case) together with known results about the strong monodromy conjecture in the two variable case. Here an amazing cancellation occurs so that possible counterexamples fail. We also show the relation between the pole orders of topological zeta function and the root multiplicities of Bernstein-Sato polynomial in the case $Z$ has equimultiplicity and $Z_{\rm red}$ has only weighted homogeneous singularities with $n=3$ or $Z_{\rm red}$ has only homogeneous isolated singularities with $n>3$.
\end{abstract}
\maketitle

\section*{Introduction}
\nin
Let $f$ be a defining polynomial of a projective hypersurface $Z\sst\PP^{n-1}$ such that the associated reduced hypersurface $Z_{\rm red}$ has only weighted homogeneous singularities. Set $d\defs\deg Z$. We have the following (which is an immediate consequence of \cite[Theorem 2]{wh1} and \cite[Theorem 2]{bcm}, see also \cite[Theorem C]{BV} including the non-reduced case).

\begin{ithm} \label{T1}
Assume $Z$ is reduced and there is no nonzero vector field of degree $0$ which annihilates $f$ and whose trace is nonzero. Then $-\tfrac{n}{d}$ is a root of the Bernstein-Sato polynomial $b_f(s)$ of $f$.
\end{ithm}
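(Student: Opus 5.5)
The plan is to combine the two cited results: \cite[Theorem 2]{wh1} gives a criterion for $-\tfrac{n}{d}$ to be a root of $b_f(s)$ in terms of the pole-order spectral sequence, i.e.\ the Gauss--Manin system of the Milnor fibration, and \cite[Theorem 2]{bcm} shows this criterion holds under the stated hypothesis on vector fields.

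The setting is as follows. Since $f$ is homogeneous of degree $d$, the Milnor fiber $F_0=f^{-1}(1)$ carries the monodromy $T$, which is given by multiplication by $e^{2\pi i/d}$ on the coordinates. The root $-\tfrac{n}{d}$ corresponds to the eigenvalue-$1$ part of top degree $n-1$, and to the lowest Hodge (pole-order) level, namely the class of the form $\omega_0=\sum_i(-1)^{i}x_i\,dx_1\wedge\cdots\widehat{dx_i}\cdots\wedge dx_n$ of degree $n$. By \cite[Theorem 2]{wh1}, for $Z_{\rm red}$ with only weighted homogeneous isolated singularities, $-\tfrac{n}{d}$ is a root of $b_f(s)$ if and only if the relevant graded piece $M^{(2)}_n$ of the Koszul/Brieskorn cohomology (the degree-$n$ part of the $E_2$ term of the pole-order spectral sequence) is nonzero, equivalently if $H^{n-1}(F_0,\C)_1\nes 0$ restricted to $F^{n-1}$. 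Concretely, we need $[\omega_0]$ not to be killed by $d_1=\ddd f\wedge$ acting on $\Omega^{n-1}$ in degree $0$ relative to the grading.

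The kernel of $\ddd f\wedge:\Omega^{n-1}_{\deg 0}\to\Omega^n_{\deg d}$ modulo the image of $\Omega^{n-2}$ corresponds, via contraction with the volume form, to vector fields $\xi$ of degree $0$ (linear vector fields) with $\xi f=0$, modulo Koszul relations. Under this correspondence, $\ddd(\iota_\xi{\rm vol})=({\rm tr}\,\xi)\,{\rm vol}$. Hence $[\omega_0]$, or rather the class ${\rm vol}$ in $E_1$ degree $n$, is hit by the differential $d_1$ exactly when there is such a $\xi$ with nonzero trace. The hypothesis excludes this, so ${\rm vol}$ survives to $E_2$. This is precisely the content of \cite[Theorem 2]{bcm} in the reduced case, and it gives nonvanishing of $M^{(2)}_n$.

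The main obstacle is degeneration: we must ensure that the class surviving at $E_2$ is not killed by the higher differentials $d_r$ ($r\ges2$). We would invoke \cite{wh1}, where, for weighted homogeneous isolated singularities of $Z_{\rm red}$, the spectral sequence degenerates at $E_2$ in the relevant degrees, or the root criterion is stated directly in terms of $E_2$. The reducedness of $Z$ is used to have isolated singularities, so that the comparison in \cite[Theorem 2]{wh1} applies. Combining the two results yields $b_f(-\tfrac{n}{d})=0$.
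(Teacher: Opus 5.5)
Your proposal follows the paper's argument: $E_2$-degeneration of the pole order spectral sequence and the root criterion from the cited works, combined with identifying the image of the $E_1$-differential in the degree-$n$ part $\C\,\om$ via $\ddd f\wedge\iota_\xi\om=\xi(f)\,\om$ and $\ddd(\iota_\xi\om)=({\rm tr}\,\xi)\,\om$; this identification is exactly Remark~\ref{R4.3}, an elementary computation rather than the content of a cited theorem. Fix three slips: $-\tfrac{n}{d}$ corresponds to the $e^{-2\pi\sqrt{-1}\,n/d}$-eigenspace of the monodromy, not the eigenvalue-$1$ part; $\ddd f\wedge$ is the $E_0$-differential, while $\ddd_1$ is induced by the exterior derivative $\ddd$ (as you in fact use in your next paragraph); and you need, and should only claim, the ``if'' direction of the root criterion, since roots of $b_f(s)$ can also come from the local $b$-functions at singular points of $Z$.
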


Here the trace of a vector field of degree $0$ means the trace of the matrix consisting of the coefficients $c_{i,j}$ of a vector field $\msum_{i,j}\,c_{i,j}x_i\dd_{x_j}$ of degree 0 with $c_{i,j}\ins\C$. This theorem immediately follows from the $E_2$-degeneration of the pole order spectral sequence (see \cite[Theorem 2]{wh1} using \cite[Theorem 2]{bcm}, since the image of the $E_1$-differential $\ddd_1$ is given by the traces of annihilating vector fields of degree 0, see Remark~\ref{R4.3} below.
\sk
Employing only Jordan normal form, we can verify in a completely elementary way the following (which is a natural generalization of an argument in the proof of \cite[Proposition 8.1]{wh}, see also \cite[Lemma 3.11]{BV}).

\begin{ilem} \label{L1}
Assume $Z$ is reduced. If $f$ is annihilated by a nonzero vector field $\xi$ of degree~$0$, then $f$ is also annihilated by the semisimple part $\xi'$ of $\xi$ $($which is defined by using Jordan normal form$)$.
\end{ilem}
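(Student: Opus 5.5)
The plan is to pass from the $n\times n$ matrix of $\xi$ to the linear operator that $\xi$ induces on the finite-dimensional space $V\defs\C[x_1,\dots,x_n]_d$ of homogeneous polynomials of degree $d$. Then the statement reduces to the uniqueness of the Jordan decomposition of an endomorphism. Write $\xi=\xi_A=\msum_{i,j}c_{i,j}x_i\dd_{x_j}$ with $A=(c_{i,j})\ins\mathfrak{gl}_n(\C)$, and let $A=A_s+A_n$ be its Jordan decomposition, so $A_s$ is semisimple, $A_n$ is nilpotent, and $[A_s,A_n]=0$. By definition $\xi'=\xi_{A_s}$; put $\xi''\defs\xi_{A_n}$. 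Since $A\mapsto\xi_A$ is linear and sends commutators to commutators up to sign, we have $\xi=\xi'+\xi''$ and $[\xi',\xi'']=0$ as derivations of $\C[x]$. All three preserve $V$, so we regard them as endomorphisms $T,S,N$ of $V$ with $T=S+N$ and $SN=NS$.

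Next we check that $S$ is semisimple and $N$ is nilpotent on $V$. After a linear change of coordinates we may assume $A_s$ is diagonal with eigenvalues $\la_1,\dots,\la_n$. Then each monomial $x^\nu$ is an eigenvector of $\xi'$ with eigenvalue $\msum_i\nu_i\la_i$ (up to the transpose convention in the definition of $\xi_A$, which does not matter here). Hence $S$ is diagonalizable on $V$. For $N$, use a basis in which $A_n$ is strictly triangular, so $\xi''$ is a combination of $x_i\dd_{x_j}$ with $i<j$, or $i>j$ depending on the convention. Such an operator strictly increases the weight $\msum_i i\,\nu_i$ (or strictly decreases it) on monomials of degree $d$. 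Since $V$ is finite-dimensional, $N$ is nilpotent.

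Alternatively, $\exp(tN)$ is the pull-back by $\exp(tA_n)$, which is unipotent. The two bases used above (diagonalizing $A_s$, triangularizing $A_n$) need not coincide, but each property is basis-independent, so this causes no problem. By uniqueness of the Jordan decomposition, $S$ is the semisimple part of $T$ and $N$ its nilpotent part. In particular $S$ is a polynomial in $T$ without constant term.

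Finally, if $\xi f=0$, i.e. $Tf=0$, then $Sf=p(T)f=0$ for such a polynomial $p$. Equivalently, $f$ lies in the generalized $0$-eigenspace of $T$, on which $S$ acts by $0$. Thus $\xi' f=0$. If $\xi'=0$, i.e. $A$ is nilpotent, the conclusion is trivial. The hypothesis that $Z$ is reduced is not needed in this argument; presumably it matters only for the later use of the lemma. The only step requiring care is the compatibility of the Jordan decomposition of $A$ with that of the induced operator on $V$ (semisimplicity of $S$, nilpotence of $N$, and commutation). Everything else is formal.
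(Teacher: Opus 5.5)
Your proof is correct. It uses the same ingredients as the paper but concludes differently.

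\textbf{Shared ingredients.} In Jordan coordinates, $\xi'$ acts diagonally on monomials by ${\bf x}^\nu\mapsto\ell_{\xi'}(\nu){\bf x}^\nu$. The off-diagonal part $\xi''$ strictly lowers a weight on degree-$d$ monomials; the paper uses $\phi(\nu)=\sum_i(d{+}1)^i(\nu_i{+}1)$ together with the maps $\rho^{(j)}$. It also preserves the level sets of $\ell_{\xi'}$, since $\{j,j{+}1\}\subset I(\alpha')$; this is exactly your commutation $[\xi',\xi'']=0$.

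\textbf{The paper's conclusion.} It argues by decreasing induction along the filtration $G$ defined by $\phi$. Take the $\phi$-largest monomial ${\bf x}^\nu$ of $f$ with $\ell_{\xi'}(\nu)\neq0$ and look at its coefficient in $\xi f$. The contribution of $\xi''$ can be neglected, so $\ell_{\xi'}(\nu)a_\nu=0$.

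\textbf{Your conclusion.} You package the same facts as ``$S$ semisimple, $N$ nilpotent, $SN=NS$'' on $V=\C[{\bf x}]^d$. Uniqueness of the Jordan--Chevalley decomposition then gives $S=T_s=p(T)$ with $p(0)=0$.

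\textbf{Trade-offs.} Your route is shorter and more conceptual, and it generalizes at once. For example, $\xi'$ preserves every $\xi$-stable subspace of $V$, and $\xi f=cf$ implies $\xi'f=cf$. The paper's route deliberately avoids any uniqueness theorem and uses only the Jordan normal form of $A$ and the linear independence of monomials. It yields directly ${\rm Supp}_{\bf x}f\subset\ell_{\xi'}^{-1}(0)$, the form used in Corollary~\ref{C1} and Proposition~\ref{P4.1}; this also follows immediately from your $\xi'f=0$ in Jordan coordinates.

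You are also right that reducedness of $Z$ plays no role in the argument; the paper's proof does not use it either.
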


The trace of a vector field of degree 0 is zero if its semisimple part vanishes. So Lemma~\ref{L1} implies the following (which can replace \cite[Propositions 8.1-2]{wh}).

\begin{icor} \label{C1}
Assume $Z$ is reduced. If the second hypothesis of Theorem~{\rm\ref{T1}} is not satisfied, then $f$ is extremely degenerated, that is, it is annihilated by a nonzero ``diagonal'' vector field of degree $0$ of the form $\msum_{i=0}^n\,c_ix_i\dd_i$ with $c_i\ins\Q$.
\end{icor}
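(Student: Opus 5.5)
Assume the second hypothesis of Theorem~\ref{T1} fails. Then there is a nonzero vector field $\xi=\msum_{i,j}c_{i,j}x_i\dd_{x_j}$ of degree $0$ with $\xi f=0$ and $\operatorname{tr}(c_{i,j})\nes 0$. The plan is to pass from $\xi$ to a diagonal vector field in three steps: take its semisimple part, diagonalize it, and then replace the possibly complex eigenvalues by rational ones.

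First, apply Lemma~\ref{L1}. The semisimple part $\xi'$ of $\xi$ also annihilates $f$. It is nonzero, because the semisimple and nilpotent parts have the same trace as $\xi$ and the nilpotent part has trace $0$. So $\operatorname{tr}\xi'=\operatorname{tr}\xi\nes 0$. Next, after a linear change of coordinates $x\mapsto gx$ with $g\in GL_n(\C)$, $\xi'$ becomes a diagonal field $\msum_i a_ix_i\dd_{x_i}$ with $a_i\in\C$ not all zero. The statement is understood up to such a coordinate change, which does not affect $Z$ up to isomorphism. Then $f$ is annihilated by it. Writing $f=\msum_\nu u_\nu x^\nu$, this means that
\[
\msum_i a_i\nu_i=0 \quad\text{for every } \nu \text{ with } u_\nu\nes 0 .
\]

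The main step is to pass from complex $a_i$ to rational $c_i$. The conditions $\msum_i a_i\nu_i=0$ form a system of linear equations with integer coefficients $\nu_i$. Its solution space $V\sst\C^n$ is therefore defined over $\Q$, that is, $V=V_\Q\otimes\C$ with $V_\Q$ the space of rational solutions. Since $V\nes 0$, we also have $V_\Q\nes 0$. Pick a nonzero $(c_i)\in V_\Q$. Then $\msum_i c_ix_i\dd_{x_i}$ annihilates $f$, which gives the conclusion.

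One must be a little careful here. The trace condition is not needed for the conclusion; it is used only to guarantee that the semisimple part is nonzero. Indeed, if $\xi$ were nilpotent, $\xi'=0$ and nothing would follow. I expect no real obstacle beyond this point. The substance lies in Lemma~\ref{L1}, and the remaining rationality step is just the fact that the kernel of an integer matrix has a rational basis.
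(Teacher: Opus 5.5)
Your proof is correct and follows the paper's route: Lemma~\ref{L1}, plus the fact that a degree-$0$ field with vanishing semisimple part has trace zero, gives a nonzero diagonal annihilating field in Jordan-form coordinates, whose coefficients are then made rational. The only difference is cosmetic, in that last step: the paper writes $c_i=\sum_j\gamma_j c'_{j,i}$ with $\gamma_j$ a $\Q$-basis of $\sum_i\Q\,c_i$ and splits each relation $\sum_i c_i\nu_i=0$ using the $\Q$-linear independence of the $\gamma_j$, whereas you use the equivalent fact that the kernel of an integer matrix is spanned by rational vectors.
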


Here we have $c_i\ins\Q$ by using a $\Q$-basis $\ga_j$ of $\msum_{i=0}^n\,\Q\,c_i\sst\C$ so that $c_i\eq\msum_j\,\ga_jc'_{j,i}$, where we may get several annihilating vector fields of degree 0 with rational coefficients. (Note that monomials are $\C$-{\it linearly independent\one} in the polynomial ring. This is the key point in the proof of Lemma~\ref{L1}.)
\sk
In the reduced curve case, using Theorem~\ref{T1} and Corollary~\ref{C1}, it is easy to reduce the {\it strong monodromy conjecture\one} for the {\it local topological zeta function\one} $Z^{\rm top}_{f,0}(s)$ (claiming that a pole of $Z^{\rm top}_{f,0}(s)$ is a root of the Bernstein-Sato polynomial $b_f(s)$, see \cite{DL}, \cite{BSY}, \cite{bha}, \cite{Wa}, \cite{BBV}, \cite{BV} among others) to the following  (using also some known results in the two variable case \cite{Lo}, see Remark~\ref{R3.1} below).

\begin{ithm} \label{T2}
If $Z$ is a reduced curve $C$ and is extremely degenerated, then any pole of the local topological zeta function $Z^{\rm top}_{f,0}(s)$ is a pole of the local topological zeta function of a singular point of $C$.
\end{ithm}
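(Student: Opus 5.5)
We plan to make the extreme degeneracy explicit and then compute $Z^{\rm top}_{f,0}(s)$ by the Denef--Loeser formula for Newton-nondegenerate polynomials in three variables.

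\emph{Normal form.} By Corollary~\ref{C1}, $f$ is annihilated by $\xi=\msum_i c_ix_i\dd_i$ with $c_i\ins\Q$. We may assume $\xi$ is not a multiple of the Euler field, since otherwise we subtract the appropriate multiple of $\msum_i x_i\dd_i$. Every monomial $x^\nu$ of $f$ then satisfies $|\nu|\eq d$ and $\msum_i c_i\nu_i\eq c$ for some constant $c$. Hence the exponents of $f$ lie on a line $L$, and the Newton polytope of $f$ is a segment $\sigma\sst L$. Choose a primitive integral vector $w$ along $L$. Then $f\eq x^{\mu}g(x^{w^+}/x^{w^-})$ after clearing denominators, so
$$f\eq x^{\mu}\,\mprod_j\bl(x^{w^+}-\la_jx^{w^-}\br),$$
with distinct $\la_j\nes0$. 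Reducedness forces the entries of $\mu$ to be in $\{0,1\}$ and the binomial factors to be distinct and reduced. In particular, the restriction of $f$ to every face of $\Gp(f)$ is either a monomial or the polynomial above. Its singular locus in the torus is empty because the $\la_j$ are distinct, so $f$ is Newton-nondegenerate.

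\emph{Candidate poles.} Next we determine the facets of $\Gp\eq\sigma+\R_{\ges0}^3$. Since the only compact face of positive dimension is the segment $\sigma$, there is no compact facet. In particular, the hyperplane $|\nu|\eq d$ with normal $(1,1,1)$ is not a facet, and the suspicious candidate $-\tfrac{3}{d}$ does not appear a priori. This is where the expected cancellation should come from. The Denef--Loeser formula then expresses $Z^{\rm top}_{f,0}(s)$ as a sum over faces of $\Gp$, with candidate poles $-1$ and $-|a|/N(a)$ for the primitive normals $a$ of the facets. Each facet is noncompact and contains $\sigma$ together with some coordinate direction $e_k$. Its normal therefore satisfies $a_k\eq0$, and $a$ is determined by the two endpoints of $\sigma$ projected to the coordinate plane $\nu_k\eq0$.

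\emph{Identifying local poles.} For such a facet we dehomogenize at the coordinate point $P_k$ by setting $x_k\eq1$. The local equation of $C$ at $P_k$ is the two-variable polynomial $f|_{x_k=1}$, which is again nondegenerate. Its Newton polygon is the projection of $\Gp$ to the plane $\nu_k\eq0$, and its edges correspond exactly to the facets of $\Gp$ containing $e_k$, with the same $|a|$ and $N(a)$. Whenever the candidate $-|a|/N(a)$ is actually a pole, we want to show that $P_k$ is a singular point of $C$ and that $-|a|/N(a)$ is a pole of the local topological zeta function of $C$ at $P_k$. We then use the two-variable results (Loeser \cite{Lo}) for the latter. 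The pole $-1$ appears in the local zeta function of any singular point, so it needs no separate treatment, provided $C$ is singular. If $C$ is smooth, $f$ is a nondegenerate binomial-type product and the check is direct.

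\emph{Main obstacle.} The hard step is the comparison of residues, i.e.\ the pole orders and the actual nonvanishing. A candidate $-|a|/N(a)$ coming from a three-dimensional facet might survive in the global local zeta function at $0$ while cancelling in the two-variable local zeta function at $P_k$, or conversely. We would try to write the three-variable Denef--Loeser sum, face by face, as an explicit combination of the two-variable sums at the points $P_k$. This should be possible by fibering the cone over $e_k$, in the spirit of deducing the three-variable formula from the two-variable one in this situation. The degenerate cases need separate care: an endpoint of $\sigma$ lying on a coordinate axis or edge, or $\mu\nes0$. In those cases a facet may be a coordinate hyperplane and the corresponding point $P_k$ may be smooth on $C$, so we would check directly that the residue at such a candidate vanishes.
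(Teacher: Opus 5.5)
Your outline matches the paper's proof. You use the same normal form, in the paper's coordinates $f=x^iy^jz^k\prod_{l=1}^r(x^ay^b-\lambda_lz^c)$ with $c=a+b$, and the same Newton-nondegeneracy argument. You also make the same key observation that there is no compact facet, so $-\tfrac3d$ is not a candidate. Finally, you identify the facets with normals $(c,0,a)$ and $(0,c,b)$ with the edges of the Newton polygons at $p_1=[0:1:0]$ and $p_2=[1:0:0]$.

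The gap is that the step you call the main obstacle is where the proof actually lives, and you leave it as a hope. Two concrete facts are needed there.

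\emph{First fact.} At a singular $p_k$, the facet candidate is a genuine pole of the two-variable zeta function. By \eqref{2.1} for $n=2$,
\[
Z_1(s)=\frac{1}{ms+a+c}\Bigl(\frac{a}{is+1}+\frac{c}{ks+1}-\frac{rs}{s+1}\Bigr),\qquad m=ci+ak+acr .
\]
The point $s_0=-\tfrac{a+c}{m}$ satisfies $s_0>-1$ unless $s_0=-1$ or $p_1$ is a smooth point of $C$; smoothness forces $i=k=0$ and $r=a=1$. When $s_0>-1$, all three terms of the bracket are positive, so the residue is nonzero. The same argument works at $p_2$. So a candidate cannot cancel at a singular $p_k$ while surviving at $0$. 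The converse scenario you worry about is irrelevant, since only one implication is needed.

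\emph{Second fact.} You must handle a facet candidate attached to a smooth point, and here you look in the wrong place. Your claim that every facet contains $\sigma$ and some $e_k$ overlooks the coordinate facets. Those have $(N,\nu)\in\{(0,1),(1,1)\}$ and only ever contribute $-1$. The problematic case is, up to swapping $x$ and $y$, $b=r=1$, $j=k=0$. There the non-coordinate facet $\sigma+\mathbb{R}_{\geq0}e_1$ has normal $(0,a+1,1)$. Its projection is the $B_1$-edge from $(1,0)$ to $(0,a+1)$, and $p_2$ is smooth. You must show that $-\tfrac{a+2}{a+1}$ is not a pole of $Z^{\rm top}_{f,0}(s)$. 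The paper does this with \cite[Proposition 14]{LV}, confirmed by computer in Remark~\ref{R3.2}. Computing the residue by hand from $J_{\sigma_0},J_P,J_Q$ also gives $0$.

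\emph{A cleaner route.} The ``explicit combination of two-variable sums'' you were looking for comes from the resolution of Remark~\ref{R3.1}, not from fibering cones. With $h_p$ a local equation of $C$ at $p$, it gives
\[
(ds+3)\,Z^{\rm top}_{f,0}(s)=\chi(\mathbb{P}^2\setminus C)+\frac{\chi(C\setminus{\rm Sing}\,C)}{s+1}+\sum_{p\in{\rm Sing}\,C}Z^{\rm top}_{h_p,p}(s).
\]
This is the identity $Z\cdot(ds+3)=Z1+Z2+R$ of Remark~\ref{R3.2}. It shows that every pole other than $-1$ and $-\tfrac3d$ is a local pole at a singular point. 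Hence the $B_1$ candidate needs no separate treatment unless it equals $-\tfrac3d$. That would mean $\tfrac{a+2}{a+1}=\tfrac{3}{a+1+i}$, which happens only for the smooth conic. By your no-compact-facet observation, $-\tfrac3d$ can then be a pole only if it equals $-1$ or a facet candidate at a singular $p_k$. The pole $-1$ occurs at every singular point, as you note, and the other case is covered by the first fact.

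Finally, your sentence about smooth $C$ is wrong. For $f=xy-z^2$ one gets $Z^{\rm top}_{f,0}(s)=\tfrac{s+3}{(s+1)(2s+3)}$, so the statement tacitly assumes $C$ is singular.
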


This can be verified rather easily if one admits a formula for the local topological zeta function of a Newton-nondegenerate polynomial of three variables, see \cite{DL} (which is quite amazing). It is however not completely trivial, since a ``fake" pole may appear associated with a $B_1$-facet in the sense of \cite{LV}, see the proof of Theorem~\ref{T2} in Section~\ref{S3} below.
In the applied case it is possible to deduce the formula from the one for the two variable case (which seems much easier to prove) using the blowup at the origin followed by the ones with line centers together with some computations of rational functions by a computer algebra program such as Macaulay2 or Singular among others. Here it is very surprising that a pole $\tfrac{1}{ds+3}$ {\it disappears\one} (more precisely, the numerator is divisible by $ds\pl3$), when one calculates $Z^{\rm top}_{f,0}(s)$ using a desingularization even if the Euler number of $\PP^2\stm C$ does not vanish so that one could expect a counterexample, see Remark~\ref{R3.2} below. Note also that a solution of the strong monodromy conjecture for the local topological zeta function in a reduced case implies that for a non-reduced case if the multiplicities are constant, see Remark~\ref{R1} below.
\sk
The strong monodromy conjecture has been shown quite recently in the not necessarily reduced curve case under the hypothesis that the associated reduced curve has only weighted homogeneous singularities (see \cite{BV}), where the vanishing of the residue at $-\tfrac{3}{d}$ is proved in the needed case. Here it does not seem to be mentioned that the following assertion actually holds.

\begin{ithm} \label{T3}
Let $Z\sst\PP^{n-1}$ be a projective hypersurface defined by a polynomial $f$ with $n\gess4$. Assume the reduced hypersurface $Z_{\rm red}$ has only homogeneous isolated singularities. Then the strong monodromy conjecture holds for the topological zeta function of $f$.
\end{ithm}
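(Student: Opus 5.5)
The plan is to compute the poles of the local topological zeta function $Z^{\rm top}_{f,0}(s)$ from the embedded resolution obtained by first blowing up the origin of $\C^n$. After this blowup, the remaining singularities lie over the singular points of $Z_{\rm red}$. At each such point $z$ the hypersurface $Z$ is locally $\prod_k g_k^{m_k}$, where the $g_k$ are local equations of the components. Since $z$ is a homogeneous isolated singularity of $Z_{\rm red}$, one further point blowup at $z$ gives a divisor with normal crossings. Its exceptional divisor $E_z\cong\PP^{n-2}$ meets the strict transform of $Z_{\rm red}$ transversally along the projectivized tangent cone.

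The candidate poles are therefore of three kinds. The first comes from the divisor $E_0$ over the origin, with numerical data $(N,\nu)=(d,n)$ where $d=\sum m_k d_k$; it gives the candidate $-n/d$. The second comes from the components of $Z$, giving $-1/m_k$. The third comes from the divisors $E_z$; if $g$ is the local reduced equation at $z$, of degree $e_z$ in $n-1$ variables, then $(N,\nu)=(\mathrm{mult}_z f, n-1)$, giving $-(n-1)/\mathrm{mult}_z f$. Poles of the form $-1/m_k$ are always roots of $b_f(s)$. For the $E_z$ poles I would use the local product structure: near a generic point of the line over $z$, $f$ is locally analytically (up to a unit) a homogeneous polynomial in $n-1$ variables times a smooth factor. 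Its Bernstein--Sato polynomial divides $b_f$, and the root $-(n-1)/\mathrm{mult}_z f$ is known for such non-reduced homogeneous isolated singularities, either from the reduced case through Theorem~\ref{T1} type arguments in lower dimension or directly by Remark~\ref{R1}.

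The main step is the pole $-n/d$. If $\chi(\PP^{n-1}\setminus Z)\neq0$, I would like to show $-n/d$ is a root of $b_f$ via the $E_2$-degeneration argument in Theorem~\ref{T1} and its non-reduced version in \cite{BV}. This requires excluding annihilating degree-$0$ vector fields with nonzero trace. By Lemma~\ref{L1} and Corollary~\ref{C1}, such a field forces $f$ to be extremely degenerate, annihilated by a diagonal field $\sum c_i x_i\partial_i$. Then $Z$ is invariant under a $\C^*$-action on $\PP^{n-1}$ with $n\ge4$. The fixed loci and closures of orbits through points of $Z$ force positive-dimensional singular loci of $Z_{\rm red}$, unless $Z_{\rm red}$ is a cone or has non-isolated singularities. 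Using the isolatedness of the singularities for $n-1\ge3$, I expect to reach a contradiction. The exception is when such a $Z$ is a cone over a smooth hypersurface, which is impossible since a cone has a non-isolated singular vertex for $n\ge4$ unless it is degenerate.

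In the remaining situation the residue of $Z^{\rm top}_{f,0}$ at $-n/d$ is a multiple of $\chi(\PP^{n-1}\setminus Z)$ corrected by contributions of the $E_z$. I would verify by direct computation that these corrections all appear with the factor $(N_z s+\nu_z)$ in their numerators for $n\ge4$, so the residue vanishes when the trace condition fails. This comparison of the residue with the trace obstruction is where I expect the real difficulty to lie.
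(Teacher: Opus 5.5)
\textbf{There is a genuine gap.} Your handling of the poles $-1/m_k$ and $-(n-1)/\mathrm{mult}_z f$ is in line with the paper. So is your reduction of the pole $-n/d$, via Theorem~\ref{T1} and Corollary~\ref{C1}, to the case where $f$ is annihilated by a diagonal field $\sum_i c_ix_i\partial_{x_i}$. The gap is the step after that, which is the heart of the proof. Your $\C^*$-action argument uses only that the singularities of $Z_{\rm red}$ are isolated, and that cannot suffice. For $n=4$ and $d\ge3$, the polynomial $f=x_1^{d-1}x_2+x_3^d+x_4^d$ defines an irreducible surface:
\begin{itemize}
\item its only singular point is $[0:1:0:0]$;
\item it is not a cone, since $\partial_{x_2}f=x_1^{d-1}\ne0$;
\item it is annihilated by $x_1\partial_{x_1}-(d-1)x_2\partial_{x_2}$, whose trace $2-d$ is nonzero.
\end{itemize}
What rules out such examples is that the singularities are \emph{homogeneous}, and your sketch never uses this hypothesis. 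The paper does this in Proposition~\ref{P4.1}. Suppose $\mathrm{Supp}\,f\subset\ell^{-1}(0)$ with $\ell=\sum_jc_j\nu_j$ and $c_i=\max_j|c_j|$. Then $f|_{x_i=1}$ is weighted homogeneous with weights $w_j=(1-c_j/c_i)/d\le 2/d$, so the $i$-th coordinate point is an isolated singular point of $Z$. That singularity is homogeneous, and the spectrum of a weighted homogeneous isolated singularity (with $\mu\ne1$) determines its weights. Hence all $w_j$ with $j\ne i$ coincide, and this forces $f$ to be independent of $x_i$ when $d\ge4$. The case $d=3$ is handled separately using ordinary double points, and $d=2$ gives a smooth quadric. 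In the example above the weights at $[0:1:0:0]$ are $\frac{1}{d-1},\frac1d,\frac1d$, which is exactly what homogeneity forbids.

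\textbf{The cone case is not impossible.} It is precisely the case that survives Proposition~\ref{P4.1}. A cone over a smooth hypersurface of $\PP^{n-2}$ has a single singular point, its vertex, and that is a homogeneous isolated singularity. So it satisfies the hypotheses of Theorem~\ref{T3}. Moreover $f=g(x_1,\dots,x_{n-1})$ is annihilated by $x_n\partial_{x_n}$, of trace $1$, so Theorem~\ref{T1} gives nothing here. The paper disposes of it directly: $Z^{\rm top}_{f,0}$ and $b_f$ coincide with those of the homogeneous isolated singularity $g$ in $n-1$ variables, whose only poles $-1$ and $-(n-1)/d$ are roots of $b_g$. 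The smooth quadric is covered by Theorem~\ref{T1}, since all its annihilating vector fields are traceless. With this dichotomy in hand, your proposed residue computation at $-n/d$ is unnecessary; as written it is a hope, not an argument.

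\textbf{A simplification you missed.} For $n\ge4$ a hypersurface with only isolated singularities is irreducible. So $f=g^m$, and Remark~\ref{R1} reduces everything to the reduced case. The local factorizations $\prod_kg_k^{m_k}$ with several branches that you allow for never occur.
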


This is reduced to Theorem~\ref{T1} and Corollary~\ref{C1} using Proposition~\ref{P4.1} and Remark~\ref{R1} below, since the strong monodromy conjecture for a homogeneous polynomial with an isolated singularity is easy. Here that $Z$ is {\it irreducible\one} since $n\gess4$. (Note that the local topological zeta function coincides with the global one in the case of a homogeneous polynomial, and this is the same for the Bernstein-Sato polynomial.)

\begin{irem}\label{R1}
For a positive integer $m$, the poles of the local topological zeta function for $f^m$ are those for $f$ divided by $m$. Indeed, the $N_i$ are multiplied by $m$ (although the $\nu_i$ do not change) by replacing $f$ with $f^m$ in the notation of Section~\ref{S1} below. Moreover the roots of the Bernstein-Sato polynomial $b_f(s)$ divided by $m$ are contained in the roots of $b_{f^m}(s)$, see the last divisibility in Remark~\ref{R4.2} below. So the strong monodromy conjecture for the local topological zeta function of $f^m$ is reduced to that for $f$.
\end{irem}

As for the relation between the orders of poles of $Z^{\rm top}_{f,0}(s)$ and the multiplicities of roots of $b_f(s)$, we can show the following.

\begin{ithm}\label{T4}
If $n\eq3$, assume the curve $Z\sst\PP^{n-1}$ has equimultiplicity and the reduced curve $Z_{\rm red}$ has only weighted homogeneous singularities. In the case $n\sgt3$, assume the reduced $Z_{\rm red}$ has only homogeneous isolated singularities. Then for any pole $\al$ of the topological zeta function $Z^{\rm top}_{f,0}(s)$, its multiplicity is at most that of $\al$ as a root of the Bernstein-Sato polynomial $b_f(s)$, that is, the product $b_f(s)\one Z^{\rm top}_{f,0}(s)$ has no poles as a rational function of $s$.
\end{ithm}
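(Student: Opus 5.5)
Write $f=\prod_k f_k^{m_k}$ with $f_k$ irreducible, and let $f_{\rm red}=\prod_k f_k$. The plan is to compute $Z^{\rm top}_{f,0}(s)$ explicitly from the embedded resolution obtained by first blowing up the origin of $\C^n$. The exceptional divisor $E_0\cong\PP^{n-1}$ has $N_0=d$ and $\nu_0=n$. Near $E_0$ the strict transform of $f^{-1}(0)$ is the cone over $Z$, which is locally a product of the affine germs $(Z_{\rm red},z)$, with multiplicities, times a line. By equimultiplicity (automatic when $n>3$, since there $Z=Z_{\rm red}^m$ with $Z_{\rm red}$ irreducible), the local multiplicity along each branch through a singular point $z$ is a single constant $m_z$. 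So the resolution over $E_0$ is a resolution of $(Z_{\rm red},z)$ with all $N_i$ multiplied by $m_z$, as in Remark~\ref{R1}. Via the standard formula for topological zeta functions of such "cone" situations (as used in Proposition~\ref{P4.1}), this gives
\begin{equation*}
Z^{\rm top}_{f,0}(s)=\frac{1}{ds+n}\Bigl(\chi(U)+\msum_{z}\,\bigl(\text{a term whose poles are those of } Z^{\rm top}_{g_z^{m_z},0}(s)\bigr)\Bigr),
\end{equation*}
where $U\defs\PP^{n-1}\stm Z$ and $g_z$ is a local weighted homogeneous equation of $Z_{\rm red}$ at $z$. Each local term is itself a product of $1/(ds+n)$ with a rational function in $s$. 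Consequently every pole of $Z^{\rm top}_{f,0}$ is either $-n/d$ or a pole $-\nu_i/(m_zN_i)$ coming from a local singularity. Moreover the order of such a pole is at most its order as a pole of the local zeta function, plus one if it coincides with $-n/d$.

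Next I would handle the local poles. For a weighted homogeneous isolated singularity $g$ (in two variables for $n=3$; homogeneous with $n-1$ variables for $n>3$), the local zeta function has only simple poles apart from $-1$. Its nontrivial pole is $-\sum_i w_i$, the weights of $g$, which is a root of $b_g$. For $g^{m}$ the poles get divided by $m$, and $b_{g^m}$ contains the corresponding roots, as in Remark~\ref{R1}. I then need these to be roots of $b_f$ with at least the required multiplicity. For this I would use that $b_{f,0}$ is divisible by the local $b$-functions of $f$ at points of $\C^n\stm\{0\}$ near $0$. Near a point on the line over $z$, $f$ is analytically $g_z^{m_z}$ times a unit in extra variables, so $b_{g_z^{m_z}}$ divides $b_f$.

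The key issue is the candidate pole $\al=-n/d$. Its order can be $2$ precisely when it coincides with a local pole, i.e.\ $n/d=\sum w_i/m_z$. In that case I must show that $\al$ is a double root of $b_f$, or else that the residue cancels. This is the step I expect to be the main obstacle. I would first try to show the double root via the pole order spectral sequence argument underlying Theorem~\ref{T1}: combine the contribution of $E_0$ (the cohomology of $U$ with the appropriate local system) with the local Jordan blocks of the monodromy at $z$. A nontrivial extension between these should produce a Jordan block of size two and hence a double root. If instead the coefficient $\chi(U)$ plus local contributions vanishes at $\al$, then the order drops, as in the cancellation of Remark~\ref{R3.2}. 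Failing a general argument, I would check that coincidence forces strong numerical restrictions: for $n>3$, homogeneous singularities of degree $d$ with $n-1$ variables give local pole $-(n-1)/(m d')$, so coincidence essentially never occurs. For $n=3$, I would treat the remaining case via the explicit two-variable formulas of \cite{Lo}. Finally, for $\al$ a simple pole, I would use Theorem~\ref{T1} together with Corollary~\ref{C1} and Theorem~\ref{T2}: in the extremely degenerated case, $-n/d$ is a pole only if it equals a local pole, which brings us back to the previous case.
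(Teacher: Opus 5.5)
There is a genuine gap, and it sits exactly at the step you flag as the main obstacle. The theorem's whole content lies in that step.

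\textbf{What is fine.} Your first part (blow up the origin as in Remark~\ref{R3.1}, so that every pole other than $-\tfrac{n}{d}$ is a local pole of some $g_z^{m_z}$, and $b_{g_z^{m_z}}$ divides $b_f$ by localizing at points of $\C^n\stm\{0\}$) is correct. The paper uses the same reduction to bring everything down to the order of $-\tfrac{n}{d}$. Your handling of a simple pole at $-\tfrac{n}{d}$ via Theorem~\ref{T1}, Corollary~\ref{C1} and Theorem~\ref{T2} is also acceptable, with Proposition~\ref{P4.1} in place of Theorem~\ref{T2} when $n>3$.

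\textbf{Where the gap is.} For the order of $-\tfrac{n}{d}$ you offer only hopes, and parts of what you say are wrong:
\begin{itemize}
\item Your numerical fallback for $n>3$ is false. By Remark~\ref{R4.4}, $\tfrac{n-1}{e}=\tfrac{n}{d}$ holds whenever $d=an$ and $e=a(n-1)$. An example is $n=d=4$, $e=3$, realized by a general quartic surface with a triple point (an $\tilde E_6$ point, which is homogeneous). This coincidence case is precisely what the paper must handle in Case~II.
\item The order is not bounded by $2$. For a nodal plane cubic one gets $Z^{\rm top}_{f,0}(s)=\tfrac{2(s+1)^2+1}{3(s+1)^3}$, so a triple root of $b_f$ at $-1$ is required.
\item A size-two Jordan block for $\la=e^{-2\pi\sqrt{-1}\,n/d}$ does not by itself place a double root at $-\tfrac{n}{d}$ rather than elsewhere in $-\tfrac{n}{d}+\Z$. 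Pinning the multiplicity to $-\tfrac{n}{d}$ needs Hodge-filtration information.
\end{itemize}

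\textbf{The missing idea.} The paper uses \cite[Theorem 3.8]{bha}, together with its extension to $k=d$ in Remark~\ref{R4.7}, which gives multiplicity $3$ when $d=n$.
\begin{itemize}
\item Its hypothesis is the vanishing of $\Gr_P^{n-2}H^{n-2}(F_{\!f},\C)_\la$.
\item By the pole order spectral sequence and Remark~\ref{R4.3}, this vanishing holds as soon as no nonzero vector field of degree $0$ annihilates $f$, because the $E_1$-differential is given by traces of annihilating degree-$0$ vector fields.
\item For $n\ge4$, the rest of the proof disposes of such vector fields. Lemma~\ref{L1} with Proposition~\ref{P4.1} kills the semisimple part. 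Lemma~\ref{L4.8}, resting on the classification in \cite{dPW2} and on \cite{We}, excludes a nilpotent field exactly in the coincidence case.
\item For $n=3$, the extremely degenerate case is handled by the explicit computation of Remark~\ref{R3.2}. That computation gives order at most $2$, so the hypothesis of \cite[Theorem 3.8]{bha} is met.
\item The remaining nilpotent case for $n=3$ is settled by \cite[Remark 8.5]{wh}, since there no minimal spectral number equals $\tfrac{3}{d}$.
\end{itemize}
Your plan never links degree-$0$ annihilating vector fields to the Hodge/pole-order piece that controls the root multiplicity. As it stands, the multiplicity statement is not proved.
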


This follows from (an extension of) \cite[Theorem 3.8]{bha} using \cite[Propositions 8.1--2]{wh} (see also \cite[p.~123]{dPW}) together with \cite{We} and \cite[Theorem 2.4 and Lemma 6.6]{dPW2}. Note that for a homogeneous polynomial, the local topological zeta function coincides with the global one, and this is the same for the Bernstein-Sato polynomial.
\sk
We thank S.-J.\ Jung, D.\ Bath, and N.\ Budur for useful comments about this note.

\tableofcontents
\numberwithin{equation}{section}

\section{Topological zeta function} \label{S1}

Let $\pi\col\Yt\tos Y\defs\C^n$ be an embedded resolution of singularities of $D\defs f^{-1}(0)$ with $D_i$ the irreducible components of $\Dt\defs\pi^{-1}(D)$. Let $N_i$, $\nu_i\mi1$ be the multiplicities of $\pi^*f$ and $\pi^*\ddd x_1{\wedge}\cdots{\wedge}\ddd x_n$ respectively. Set $D_I\defs\mcap_{i\in I}\,D_i$, $D^{\circ}_I\defs D_I\stm\mcup_{i\notin I}R_i$. The local topological zeta function is defined by
\begin{equation*}
Z^{\rm top}_{f,0}(s):=\msum_I\,\chi\bl(D^{\circ}_I\cap\pi^{-1}(0)\br)\,\mprod_{i\in I}\,(N_is\pl\nu_i)^{-1}.
\end{equation*}
This is independent of a resolution, see \cite{DL}.

\begin{rem}\label{R1.1}
In the curve case with $n\eq2$, the Euler number $\chi\bl(D^{\circ}_I\cap\pi^{-1}(0)\br)$ vanishes if $|I|\eq1$ and $|\mcup_{j\ne i}D_i\cap D_j|\eq2$, but the summands $\mprod_{i\in I}\,(N_is\pl\nu_i)^{-1}$ for $|I|\eq2$ remain if $\mcap_{i\in I}\,D_i\ne\emptyset$ and they may cancel each other out by using
\begin{equation*}
(s\pl\al)^{-1}\mi(s\pl\be)^{-1}\eq(\be\mi\al)(s\pl\al)^{-1}(s\pl\be)^{-1}.
\end{equation*}
In the reduced curve case it turns out that the poles of $Z^{\rm top}_{f,0}(s)$ are restricted to either $-1$ or $-\nu_i/N_i$ for exceptional divisors $D_i\sst\pi^{-1}(0)$ with $|\mcup_{j\ne i}D_i\cap D_j|\gess3$, see \cite[Theorem 4.3]{Ve}.
\end{rem}

\section{Newton-nondegenerate case}\label{S2}

We denote by $\Gp(f)$ the {\it Newton polytope\one} of $f$ at 0, which is the convex hull of the union of $\nu\pl\R_{\ges 0}^n$ for $\nu\ins{\rm Supp}_{\bf x}\,f$, where
\begin{equation*}
{\rm Supp}_{\bf x}\,f:=\{\nu\ins\N^n\mid c_{\nu}\ne 0\one\}\q\h{for}\q f\eq\msum_{\nu}\,c_{\nu}{\bf x}^{\nu}\ins\C[{\bf x}],
\end{equation*}
with ${\bf x}\eq(x_1,\dots,x_n)$ the coordinate system of $\C^n$.
We say that $f$ is {\it Newton nondegenerate,} if for any {\it compact\one} face $\si\sst\Gp(f)$, we have
\begin{equation*}
\mcap_{i=1}^n\,\bl\{\one x_i\dd_{x_i}f_{\si}\eq0\br\}\sst\{\one x_1\cdots x_n\eq0\one\},
\end{equation*}
where $f_{\si}\defs\mopl_{\nu\in\si}\,c_{\nu}{\bf x}^{\nu}$ with $c_{\nu}$ as above. We denote by $\Ga_{\!f}$ the union of compact faces of $\Gp(f)$. Set $d_{\si}\defs\dim\si$ for $\si<\Gp(f)$ (that is, $\si$ is a face of $\Gp(f)$). 
We have the following.

\begin{thm}[{\cite[Theorem 5.3]{DL}}]\label{T2.1}
If $f$ is Newton-nondegenerate, there is the equality
\begin{equation}\label{2.1}
Z^{\rm top}_{f,0}(s)=\msum_{d_{\si}=0}\,J_{\si}(s)+\tfrac{s}{s+1}\,\msum_{d_{\si}>0}\,(-1)^{d_{\si}}d_{\si}!\one{\rm Vol}(\si)J_{\si}(s),
\end{equation}
where the $\si$ are compact faces of $\Gp(f)$, that is, $\si<\Ga_{\!f}$.
\end{thm}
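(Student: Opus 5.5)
The plan is to prove \eqref{2.1} by computing $Z^{\rm top}_{f,0}(s)$ on an explicit \emph{toric} embedded resolution attached to $\Gp(f)$, following \cite{DL}. Here I take $J_\si(s)$ to be the contribution of the cones lying over the dual cone of $\si$. Concretely, for $a\ins\R^n_{\ges0}$ put $N(a)\defs\min_{\nu\in\Gp(f)}\langle a,\nu\rangle$ and $\nu(a)\defs\msum_i a_i$. Let $\Delta_\si$ be the cone of those $a$ on which $\langle a,\cdot\rangle$ restricted to $\Gp(f)$ attains its minimum exactly on $\si$. Choose a regular simplicial fan $\Sigma$ refining the dual fan of $\Gp(f)$. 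Then $J_\si(s)$ is the sum over the cones $\tau$ of $\Sigma$ whose relative interior lies in that of $\Delta_\si$ of the products $\mprod_j\bl(N(a_j)s\pl\nu(a_j)\br)^{-1}$, where the $a_j$ are the primitive generators of $\tau$. The reduction of non-regular cones to regular ones, which would introduce multiplicity factors, is absorbed by the choice of $\Sigma$.

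The first step is to form the toric modification $\pi_\Sigma\col X_\Sigma\tos\C^n$. On the chart of an $n$-dimensional cone with generators $a_1,\dots,a_n$ one has $\pi^*f=\mprod y_j^{N(a_j)}\,\tilde f$ and $\pi^*(\ddd x_1{\wedge}\cdots{\wedge}\ddd x_n)=\mprod y_j^{\nu(a_j)-1}\,\ddd y$. Newton nondegeneracy on every compact face is then used to show that the strict transform $\tilde f=0$ is smooth and transversal to all torus orbits. Hence $\pi_\Sigma$ is an embedded resolution whose components are the toric divisors $E_a$ (data $(N(a),\nu(a))$) together with the strict transform (data $(1,1)$). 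If nondegeneracy fails at non-compact faces one may need further blowups away from $\pi^{-1}(0)$, but these do not affect the local zeta function, since only strata inside $\pi^{-1}(0)$ are counted.

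Next, stratify $\pi^{-1}(0)$ by torus orbits $O_\tau\cong(\C^*)^{n-\dim\tau}$. Such an orbit lies in $\pi^{-1}(0)$ exactly when the relative interior of $\tau$ lies in the open positive orthant. This forces the face $\si$ with $\tau^\circ\sst\Delta_\si^\circ$ to be compact. On $O_\tau$ the strict transform is $\{f_\si\eq0\}$, viewed as a hypersurface in the torus. Since $f_\si$ is quasi-homogeneous along the directions of $\tau$, this hypersurface is a product of a torus factor with a nondegenerate hypersurface $V_\si\sst(\C^*)^{d_\si}$ whose Newton polytope is $\si$.

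The contributions of $O_\tau$ are then as follows.
\begin{itemize}
\item If $\dim\tau\eq n$, the orbit is a point and $d_\si\eq0$; it gives $\mprod_j(N(a_j)s\pl\nu(a_j))^{-1}$.
\item If $\dim\tau\slt n$, then $\chi(O_\tau)\eq0$, so $\chi(O_\tau\stm V)\eq-\chi(O_\tau\cap V)$, where $V$ is the strict transform. The two strata contribute
\begin{equation*}
\chi(O_\tau\cap V)\bl(\tfrac{1}{s+1}-1\br)\mprod_j(N(a_j)s\pl\nu(a_j))^{-1}=-\tfrac{s}{s+1}\,\chi(O_\tau\cap V)\mprod_j(\cdots)^{-1}.
\end{equation*}
\end{itemize}
The torus factor has $\chi\eq0$ unless its dimension is zero, i.e.\ unless $\dim\tau\eq n\mi d_\si$, which is exactly $\dim\Delta_\si$. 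So only the maximal cones inside $\Delta_\si$ survive. By Kouchnirenko--Bernstein--Khovanskii, $\chi(V_\si)\eq(-1)^{d_\si-1}d_\si!\,{\rm Vol}(\si)$, with the volume normalized with respect to the lattice of the affine span of $\si$. Summing over $\tau$ gives the term $\tfrac{s}{s+1}(-1)^{d_\si}d_\si!\,{\rm Vol}(\si)J_\si(s)$ in \eqref{2.1}, and the points give $\msum_{d_\si=0}J_\si(s)$.

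The main obstacle is the bookkeeping in the last step. One must identify precisely which cones $\tau$ contribute, namely those whose dimension equals $\dim\Delta_\si$, and match the volume normalization and the multiplicities of non-unimodular cones with the definition of $J_\si$. The Euler characteristic formula for $V_\si$ is where nondegeneracy is essential, and I would quote Khovanskii for it rather than reprove it.
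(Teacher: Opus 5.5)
The paper does not prove this statement. It quotes it from \cite{DL}, and Remark~\ref{R3.2} only checks \eqref{2.1} by computer for the one-edge polyhedra of Section~\ref{S3}. Your outline is the standard toric-modification argument, and its geometric steps are sound: the regular fan $\Sigma$, the orbit stratification of $\pi^{-1}(0)$, $\chi((\C^*)^k)=0$ for $k>0$, the splitting $V\cap O_\tau\cong V_\sigma\times(\C^*)^{n-\dim\tau-d_\sigma}$, and Kouchnirenko--Khovanskii.

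The gap is in $J_\sigma$. As you first define it (all cones of $\Sigma$ whose relative interior lies in $\Delta_\sigma$), it is not what your computation produces, because cones of dimension $<\dim\Delta_\sigma$ contribute $0$. For example, take $f=y(y+x^2)$ and the vertex $(0,2)$. Here $\overline{\Delta_\sigma}={\rm cone}((1,0),(1,2))$ is refined by the ray $(1,1)$, and your definition would add a spurious term $1/(2s+2)$.

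More substantively, the $J_\sigma$ in the statement is not a sum over a regular fan. It is defined from a simplicial partition of the cone spanned by the $\ell_{\sigma_i}$ \emph{without new rays}, weighted by ${\rm mult}(\cdots)$. Saying the multiplicities are ``absorbed by the choice of $\Sigma$'' only changes the definition. To get \eqref{2.1} as stated you need exactly the independence statement \cite[Lemma 5.1.1]{DL}, which the paper singles out as the nontrivial point. You name this as the main obstacle but do not supply it.

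The missing step is short once stated. Put $L(a)\defs N(a)s+\nu(a)$.
\begin{itemize}
\item For real $s>0$, $L$ is linear on $\overline{\Delta_\sigma}$, since $N(a)=\langle a,m\rangle$ for any $m\in\sigma$.
\item It is also positive on $\overline{\Delta_\sigma}\setminus\{0\}$.
\item Let $\delta\subset\overline{\Delta_\sigma}$ be a simplicial cone of dimension $l=\dim\Delta_\sigma$ with primitive generators $a_1,\dots,a_l$. Take Lebesgue measure on ${\rm span}\,\Delta_\sigma$ normalized by its lattice and substitute $x=\sum_j t_ja_j$. This gives
\begin{equation*}
\int_\delta e^{-L(x)}\,dx={\rm mult}(\delta)\prod_{j=1}^l\int_0^\infty e^{-t_jL(a_j)}\,dt_j={\rm mult}(\delta)\Big/\prod_{j=1}^lL(a_j).
\end{equation*}
\item The integral is additive under subdivision. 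Hence the sum over the $l$-dimensional cones of your regular $\Sigma$ and the paper's multiplicity-weighted sum both equal $\int_{\overline{\Delta_\sigma}}e^{-L}$ for $s>0$, so they agree as rational functions.
\end{itemize}
With $J_\sigma$ restricted to top-dimensional cones and this identity added, your argument proves \eqref{2.1}.

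A minor point: $\pi_\Sigma$ need not be an isomorphism off $D$, because rays $a$ on the boundary of the orthant with $N(a)=0$ and $\nu(a)\ges2$ can occur. You are therefore implicitly using the standard extension of the formula for $Z^{\rm top}_{f,0}$ to such log resolutions, with a factor $1/\nu(a)$ for those divisors.
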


We explain the notation in the case $n\eq3$, see \cite{DL}, \cite{LV} for the general case. (This is similar and easier when $n\eq2$.)
Firstly $d_{\si}!\one{\rm Vol}(\si)$ is the {\it normalized volume\one} of $\si$. In the case $d_{\si}\eq0$ or 1, it is equal to 1 or $|\si\cap\N^3|\mi1$ respectively. If $d_{\si}\eq2$ and $\si$ is simplicial so that the vertices of $\si$ are three points $a_1,a_2,a_3\ins\N^3$, it coincides with the absolute value of the determinant of the matrix $(a_1,a_2,a_3)$ divided by the {\it lattice-theoretic distance\one} $N_{\si}$ between $\si$ and 0. The latter is defined by $N_{\si}\defs\ell_{\si}(a_i)$ for $i\ins\{1,2,3\}$ with $\ell_{\si}$ the linear function with integral coefficients whose greatest common divisor is 1 and such that $\ell_{\si}$ is orthogonal to $\si$, that is, $\si\sst\ell_{\si}^{-1}(N_{\si})$. We put $\nu_{\si}\defs\ell_{\si}(1,1,1)$, which is equal to the sum of the coefficients of $\ell_{\si}$. In general the normalized volume is defined by taking a partition of $\si$. Note that $\ell_{\si}$, $N_{\si}$, $\nu_{\si}$ are well defined for non-simplicial 2-dimensional $\si$. These can be defined also for non-compact faces (after taking the projection to $\R^2$ or $\R$ if necessary), where $N_{\si}\eq0$ or $1$ if two of the coefficients of $\ell_{\si}$ vanishes (since $f$ is reduced).
\sk
If $d_{\si}\eq2$, put
\begin{equation*}
J_{\si}(s):=1/(N_{\si}s\pl\nu_{\si}).
\end{equation*}
\sk
If $d_{\si}\eq1$ and $\si\eq\si_1\cap\si_2$ with $d_{\si_i}\eq2$ for $i\eq1,2$, set
\begin{equation*}
J_{\si}(s):={\rm mult}(\ell_{\si_1},\ell_{\si_2})\big/\bl((N_{\si_1}s\pl\nu_{\si_1})(N_{\si_2}s\pl\nu_{\si_2})\br).
\end{equation*}
Here ${\rm mult}(\ell_{\si_1},\ell_{\si_2})$ is defined by the greatest common divisor of the absolute values of three minor determinants of size $2{\times}2$ of the matrix of size $3{\times}2$ defined by the coefficients of $\ell_{\si_1},\ell_{\si_2}$ (which can be identified with the exterior product $\ell_{\si_1}{\wedge}\,\ell_{\si_2}$).
\sk
If $d_{\si}\eq0$ and the number of 2-dimensional faces containing $\si$ is three, set
\begin{equation*}
J_{\si}(s):={\rm mult}(\ell_{\si_1},\ell_{\si_2},\ell_{\si_3})\big/\bl((N_{\si_1}s\pl\nu_{\si_1})(N_{\si_2}s\pl\nu_{\si_2})(N_{\si_3}s\pl\nu_{\si_3})\br).
\end{equation*}
Here $\si_1,\si_2,\si_3$ are the three 2-dimensional faces containing $\si$ and ${\rm mult}(\ell_{\si_1},\ell_{\si_2},\ell_{\si_3})$ is the absolute value of the matrix defined by the coefficients of $\ell_{\si_1},\ell_{\si_2},\ell_{\si_3}$.
\sk
In general, let $\si_i$ for $i\in[1,r]$ be the 2-dimensional faces of $\Gp(f)$ containing $\si$ and such that the $\si_i\cap\si_{i+1}$ for $i\in[1,r]$ are 1-dimensional faces of $\Gp(f)$ with $\si_{r+1}\defs\si_r$. Then
\begin{equation*}
J_{\si}(s):=\msum_{i=3}^r\,{\rm mult}(\ell_{\si_1},\ell_{\si_{i-1}},\ell_{\si_i})\big/\bl((N_{\si_1}s\pl\nu_{\si_1})(N_{\si_{i-1}}s\pl\nu_{\si_{i-1}})(N_{\si_i}s\pl\nu_{\si_i})\br).
\end{equation*}
This can be defined more generally by choosing a simplicial partition of the cone spanned by $\ell_{\si_1},\dots,\ell_{\si_r}$. It is quite amazing that the sum is independent of the choice of a partition, see \cite[Lemma 5.1.1]{DL}. (This can be verified by using a computer in the situation of the proof of Theorem~\ref{T2}, see Remark~\ref{R3.2} below.) 

\section{Proof of Theorem~\ref{T2}} \label{S3}

\nin
Let $P,Q$ be the vertices of the unique compact face $\si_0$ of $\Gp(f)$, which is 1-dimensional. Taking a monomial $x^iy^jz^k$ of highest degree which divides $f$, we see that their coordinates are given by
\begin{equation*}
P=(i\pl ar,j\pl br,k)\q Q=(i,j,k\pl cr),
\end{equation*} 
where $a,b,c,r,i,j,k$ are non-negative integers such that $a\gess1$, $a\gess b$, $r\gess1$, $c\eq a\pl b$, $a,b$ are coprime in the case $b\gess1$, and $i,j,k$ are 0 or 1 (since $C$ is reduced). If $b\eq0$, we have $j\eq1$ (since $C$ is not a cone), and $f$ is a product of $z$ with a homogeneous polynomial $g$ of $x,y$ with degree $d{-}1$. The minimal spectral number of $g$ is $\tfrac{2}{d-1}$, which is different from $\tfrac{3}{d}$ unless $d\eq3$, where $f\eq xyz$ changing coordinates. The assertion is then easily verified using Remark~\ref{R3.1} below. So we assume $b\gess1$.
\sk
The singular points of $C$ contain $p_1\defs[0\col1\col0]$ always (since $a\sgt1$), $p_2\defs[1\col0\col0]$ if $b\nes1$ or $j\nes0$ or $k\nes0$ or $r\nes1$, and $p_3\defs[0\col0\col1]$ if $i\eq j\eq1$. The irreducible components of $C$ are rational curves passing through two of the three points $p_1,p_2,p_3$, and the number of irreducible components is $r\pl i\pl j\pl k$. This may be verified by considering the factorization of a polynomial of one variable of degree $r$ whose coefficients are those of $f$. Since this polynomial must be reduced (otherwise $f$ is non-reduced), its roots are mutually distinct and $f$ is Newton-nondegenerate.
\sk
There are five 2-dimensional faces $\si_i$ of $\Gp(f)$ with $i\ins[1,5]$. The coefficients of their corresponding linear functions $\ell^{(i)}$ for $i\ins[1,5]$ are respectively
\begin{equation*}
(1,0,0),\q(0,1,0),\q(0,0,1),\q(c,0,a),\q(0,c,b),
\end{equation*} 
and the corresponding $(N_{\si_i},\nu_{\si_i})$, which will be denoted by $(N_i,\nu_i)$, are respectively
\begin{equation*}
\begin{aligned}
&(i,1),\q(j,1),\q(k,1),\q(m,a\pl c),\q(n,b\pl c),\\
&\h{with}\q\q m\defs ci\pl ak\pl acr,\q n\defs cj\pl bk\pl bcr.
\end{aligned}
\end{equation*} 
We then see that $J_{\si_0}(s)$, $J_P(s)$, $J_Q(s)$ are given by
\begin{equation*}
\begin{aligned}
J_{\si_0}(s)&=c\big/\bl((ms\pl a\pl c)(ns\pl b\pl c)\br),\\
J_P(s)&=c^2\big/\bl((ks\pl 1)(ns\pl b\pl c)(ms\pl a\pl c)\br),\\
J_Q(s)&=b\big/\bl((is\pl 1)(js\pl 1)(ns\pl b\pl c)\br)\\
&\q+ac\big/\bl((ns\pl b\pl c)(ms\pl a\pl c)(is\pl 1)\br).
\end{aligned}
\end{equation*} 
Indeed, for the first two equalities it is enough to calculate the minor determinants of $\bl(\begin{smallmatrix}c&0&a\\0&c&b\end{smallmatrix}\br)$ and the determinant of $\bl(\begin{smallmatrix}0&0&1\\c&0&a\\0&c&b\end{smallmatrix}\br)$.
For the last one, we verify that the 2-dimensional faces containing $Q$ are $\si_1$, $\si_2$, $\si_5$, $\si_4$, and compute the determinants of $\bl(\begin{smallmatrix}1&0&0\\0&1&0\\0&c&b\end{smallmatrix}\br)$ and $\bl(\begin{smallmatrix}0&c&b\\c&0&a\\1&0&0\end{smallmatrix}\br)$. (Recall that there is another expression, but these coincide according to a computer algebra program, see Remark~\ref{R3.2} below.)
\sk
It remains to show that the pole at $s\eq{-}\tfrac{a+c}{m}$ and also the one at $s\eq{-}\tfrac{b+c}{n}$ in the case $p_2\ins{\rm Sing}\,C$ are poles of the topological zeta functions of singular points of $C$. This follows from the formula \eqref{2.1} for the curve case if $p_2\ins{\rm Sing}\,C$, since $\ell_4$, $\ell_5$ are pullbacks of the linear functions for the image of $\si_0$ by projections to $\R^2$. In the case $p_2\,{\notin}\,{\rm Sing}\,C$, where $b\eq r\eq1$, $j\eq k\eq0$, it seems quite nontrivial to show that $s\eq{-}\tfrac{b+c}{n}$ is a ``fake" pole of $Z^{\rm top}_{f,0}(s)$ associated with a $B_1$-facet, and we need \cite[Proposition 14]{LV} in an essential way. (One can verify it by using a computer, see Remark~\ref{R3.2} below.) This finishes the proof of Theorem~\ref{T2}.

\begin{rem}\label{R3.1}
In the case $f$ is a homogeneous polynomial, an embedded resolution can be obtained by the blowup of the origin followed by the base change of an embedded resolution of the projective hypersurface by the projection to the exceptional divisor of the first blowup. In view of the definition of local topological zeta function, this implies that $-\tfrac{3}{d}$ is the only possible pole of $Z^{\rm top}_{f,0}(s)$ which is not a pole of $Z^{\rm top}_{h,0}(s)$ for a local defining function $h$ of $C$.
\end{rem}

\begin{rem}\label{R3.2}
The above argument can be used to prove the formula \eqref{2.1} for our case by comparing it with the one for the two variable case. What is very amazing is that the {\it numerator\one} of the (modified) ``global" topological zeta functions of $C$ (which contains the contribution from $\PP^2\stm C$) is {\it divisible\one} by $s+\tfrac{3}{d}$ so that the pole $-\tfrac{3}{d}$ {\it disappears.} Note that we need the division by $ds\pl3$ for the passage from the (modified) ``global" zeta functions of $C$ to $Z^{\rm top}_{f,0}(s)$ (via the blowup at 0) by definition. Using Macaulay2 for instance, this can be seen as follows.
\ms
\vbox{\footnotesize\sf
\pv@S=QQ[a,b,i,j,k,r,s]; c=a+b; m=c*i+c*a*r+a*k; n=c*j+c*b*r+b*k; d=i+j+k+c*r;@
\pv@Z = -s/(s+1) *c*r/(m*s+a+c)/(n*s+b+c) + b/(i*s+1)/(j*s+1)/(n*s+b+c) +@
\pv@a*c/(n*s+b+c)/(m*s+a+c)/(i*s+1) + c^2/(k*s+1)/(n*s+b+c)/(m*s+a+c);@
\pv@Z1 = -s/(s+1)*r/(m*s+a+c) + c/(m*s+a+c)/(k*s+1) + a/(m*s+a+c)/(i*s+1);@
\pv@Z2 = -s/(s+1)*r/(n*s+b+c) + c/(n*s+b+c)/(k*s+1) + b/(n*s+b+c)/(j*s+1);@
\pv@R = 1/(i*s+1)/(j*s+1); Z - (Z1+Z2+R)/(d*s+3)@}
\ms
Here {\small\sf\verb@Z, Z1, Z2@} are the local topological zeta functions of $f$ and the two singular points of $C$, and {\small\sf\verb@R@} is the ``remaining" part of the (modified) ``global" zeta function of $C$. Notice that {\small\sf\verb@R@} is independent of $k\ins\{0,1\}$, since $\{z\eq0\}\stm{\rm Sing}\,C\cong\C^*$. In the cases $(i,j)\eq(0,0)$, $(1,0)$, and $(1,1)$, {\small\sf\verb@R@} is the contribution from $\PP^2\stm C$, $\,\{x\eq0\}\stm{\rm Sing}\,C\,({\cong}\,\C)$, and $[0\col0\col1]$ respectively. This computation shows that {\small\sf\verb@Z1+Z2+R@} is divisible by {\small\sf\verb@d*s+3@}, and the quotient coincides with {\small\sf\verb@Z@} in the field of fractions of the polynomial ring. This gives also a ``proof" of \eqref{2.1} for our case assuming it in the two variable case (and using a computer algebra program).
\sk
It is not recommended to remove ; at the end of the definition of ``{\small\sf\verb@Z@}''. If one prefers, ``{\small\sf\verb@S=QQ[a,b,i,j,k,r,s];@}'' may be replaced by ``{\small\sf\verb@S=QQ[s]; a=2; b=3; i=0; j=0; k=0; r=1;@}'' for instance.
\sk
In the case $p_2$ is not a singular point of $C$ (where $b\eq r\eq1$, $j\eq k\eq 0$), we see that $-\tfrac{b+c}{n}\,\bl({=}\,{-}\tfrac{a+2}{a+1}\br)$ is actually a ``fake" pole, and {\small\sf\verb@Z2@} coincides with {\small\sf\verb@1/(s+1)@}. This can be seen by setting ``{\small\sf\verb@S=QQ[a,i,s]; b=1; j=0; k=0; r=1;@}'' at the beginning. Note also that $s\eq{-1}$ is a simple pole, putting ``{\small\sf\verb@i=1;@}".
\sk
The coincidence with another expression of $J_Q(s)$ can be seen by the following:
\ms
\vbox{\footnotesize\sf
\pv@b/(i*s+1)/(j*s+1)/(n*s+b+c) + a*c/(n*s+b+c)/(m*s+a+c)/(i*s+1) -@
\pv@(b*c/(j*s+1)/(n*s+b+c)/(m*s+a+c) + a/(m*s+a+c)/(i*s+1)/(j*s+1))@}
\ms
As for the relation to the trace of the logarithmic vector field in \cite{BV}, it may be seen by comparing the following:
\ms
\vbox{\footnotesize\sf
\pv@u = -(3*m - d*(a+c))@
\pv@v = 3*n - d*(b+c)@
\pv@p = -a*(d-3*j) + b*(d-3*i)@
\pv@q = (a*r*j-b*r*i + c*r*j+b*r*k+b*c*r^2 - (c*r*i+a*r*k+a*c*r^2))/r@}
\msn
All of these turn out to be the same polynomial. The last one is the sum of the coefficients of a linear function $\ell$ vanishing on $P$ and $Q$, where $\ell$ is identified with a vector field $\xi$ of degree 0 annihilating $f$.
\end{rem}

\section{Proof of Theorem~\ref{T4}} \label{S4}

We first show the following (which are needed for the proofs of Theorems~\ref{T3} and \ref{T4}).

\begin{prop}\label{P4.1}
Let $Z$ be a reduced projective hypersurface of dimension at least $2$. Assume the hypersurface has only homogeneous isolated singularities and there is a nonzero linear function $\ell(\nu_1,\dots,\nu_n)\eq\msum_{i=1}^n\,c_i\nu_i$ on $\R^n$ with rational coefficients such that ${\rm Supp}_{\bf x}\one f\sst\ell^{-1}(0)$ with $f$ a defining polynomial of $Z$. Then $Z$ is a cone or smooth with $d\defs\deg f\eq2$.
\end{prop}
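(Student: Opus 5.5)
The plan is to turn the hypothesis on ${\rm Supp}_{\bf x}\one f$ into a nontrivial $\C^*$-action on $\PP^{n-1}$ preserving $Z$, and then use the local structure of $Z$ at the fixed points to force very few weights. After clearing denominators we may assume $c_i\ins\Z$. Since $f$ is homogeneous, $\sum_i\nu_i\eq d\nes0$ on ${\rm Supp}_{\bf x}\one f$, so $\ell$ is not proportional to $\sum_i\nu_i$, and the $c_i$ are not all equal. Adding a multiple of $\sum_i\nu_i$ to $\ell$ changes the $c_i$ only by a common constant and keeps the condition ${\rm Supp}_{\bf x}\one f\sst\ell^{-1}(0)$ up to a constant, so only the differences $c_j\mi c_i$ matter. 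Suppose first that all $c_i\ges0$, or all $c_i\les0$. Then every monomial of $f$ involves only the variables with $c_i\eq0$, and there is at least one variable with $c_i\nes0$. So $f$ does not depend on that variable and $Z$ is a cone. Hence we may assume the minimal weight $c_{\min}$ is negative and the maximal weight $c_{\max}$ is positive.

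Next I localize at a coordinate point of minimal weight. Take $x_1$ with $c_1\eq c_{\min}\slt0$ and $p\defs[1\col0\col\cdots\col0]$. Since $\ell(d\one e_1)\eq dc_1\nes0$, the monomial $x_1^d$ does not occur, so $p\ins Z$. In the chart $x_1\eq1$ put $g(y)\defs f(1,y)$. Every monomial $y^{\nu'}$ of $g$ satisfies $\sum_{j\ges2}(c_j\mi c_1)\nu'_j\eq -dc_1\sgt0$. So $g$ is weighted homogeneous for the nonnegative weights $w_j\defs c_j\mi c_1$, and the variables with $w_j\eq0$ do not occur in $g$. There are two cases.

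\emph{The point $p$ is singular.} Then it is an isolated singularity and $\dim Z\gess2$. The absence of any variable would force a non-isolated singularity, so all $w_j\sgt0$. Now $g$ is a quasihomogeneous isolated singularity which is, by hypothesis, analytically equivalent to a homogeneous one. The weights of a quasihomogeneous isolated singularity are analytic invariants; for instance, they can be read off from the Steenbrink spectrum. So the $w_j$ must all be equal, and $f$ involves only two weights $a\slt b$.

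\emph{The point $p$ is smooth.} Then $g$ has a linear term $y_j$ with $w_j\eq -dc_1$. I apply the same argument at a coordinate point of maximal weight. I expect to reduce again to the situation where the occurring weights are only $c_{\min},c_{\max}$, possibly after discarding variables absent from $f$, which would make $Z$ a cone. If this reduction fails, I would fall back on a direct analysis of the fixed components $\PP(V_\la)\cap Z$ of the action.

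In the two-weight situation, write the variables as ${\bf x}$ (weight $a$, $p$ of them) and ${\bf y}$ (weight $b$, $q$ of them), with $p\pl q\eq n\gess4$. Then $f$ is bihomogeneous of some bidegree $(\al,\be)$ with $\al a\pl\be b\eq0$, so $\al,\be\gess1$.
\begin{itemize}
\item If $\be\gess2$, then $f$ vanishes to order $\ges2$ along $\PP(V_a)\sst Z$, so $p\eq1$ by isolatedness.
\item If $\al\gess2$, symmetrically $q\eq1$.
\end{itemize}
If both inequalities hold then $n\eq2$, a contradiction. If $\be\eq1$ and $\al\gess2$, then $q\eq1$, and $f\eq y\,h({\bf x})$ is singular along $\{y\eq h\eq0\}$, which has dimension $n\mi3\gess1$; this is impossible. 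The case $\al\eq1$, $\be\gess2$ is symmetric. Finally, if $\al\eq\be\eq1$, then $f$ is a bilinear form in $({\bf x},{\bf y})$, so $d\eq2$ and $Z$ is a quadric. A quadric with isolated singularities in dimension $\ges1$ is either smooth or a cone, which gives the assertion.

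The main obstacle I expect is the step that forces all the weights $w_j$ to coincide. It uses the analytic invariance of the weights of a quasihomogeneous isolated singularity, together with the fact that "homogeneous" means homogeneous up to an analytic coordinate change. The bookkeeping in the smooth case at the minimal-weight point is a secondary difficulty.
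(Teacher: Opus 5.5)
\textbf{There is a genuine gap, and it comes from where you localize.}

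First, the case where $p=[1:0:\cdots:0]$ is a smooth point of $Z$ is never actually treated (``I expect \ldots'', ``I would fall back on \ldots''), so the proof is incomplete. The missing idea, which is what the paper does, is to replace $\ell$ by $-\ell$ if necessary so that $\max_j c_j\ge-\min_j c_j$. You then localize at $[e_i]$ with $c_i=\max_j|c_j|$, instead of at a point of minimal weight. The normalized weights of $f|_{x_i=1}$ are then $w_j=\tfrac{1}{d}(1-c_j/c_i)\in[0,\tfrac{2}{d}]$. Since $x_i^d$ does not occur and no weight can equal $1$ when $d\ge3$, there is no constant or linear term. So this point is automatically singular, and the smooth case never arises. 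In your smooth case $c_{\max}\ge(d-1)|c_{\min}|>|c_{\min}|$, so your fallback to the maximal-weight point would land exactly there, but you neither noticed nor proved this.

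The same bound is also what gives positivity of the weights. Your assertion that variables with $w_j=0$ do not occur in $g$ is false: for $c=(-1,-1,2,\dots)$ the monomial $x_1x_2x_3$ gives $y_2y_3$. The correct argument is that when all weights are $<1$, every monomial of $g$ lies in the square of the ideal generated by the positive-weight variables. Hence a weight-zero variable produces a line of singular points.

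Second, the step ``the weights are analytic invariants, hence all $w_j$ are equal'' is valid only for weights in $(0,\tfrac{1}{2}]$. A pair of weights $w,1-w$ contributes the same factor $t$ to $\prod_j(t-t^{w_j})/(t^{w_j}-1)$ as the pair $\tfrac{1}{2},\tfrac{1}{2}$. So at an ordinary double point the weights carry no information. Concretely, take $\ell=(-2,0,2,1)$ and $f=x_1x_2x_3+x_1x_4^2+x_2^3$. Your minimal-weight point $[1:0:0:0]$ gives $g=y_2y_3+y_4^2+y_2^3$, an ordinary double point (hence homogeneous) with weights $(\tfrac{1}{3},\tfrac{2}{3},\tfrac{1}{2})$. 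This $Z$ fails the hypothesis only at $[0:0:1:0]$, where $\mu=5$. So the proposition is not contradicted, but your local inference is wrong.

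With $c_i=\max_j|c_j|$ and $d\ge4$ one has $w_j\le\tfrac{1}{2}$. Then $\mu=\prod_j(w_j^{-1}-1)=1$ forces all $w_j=\tfrac{1}{2}$, and for $\mu>1$ the weights are read off the spectrum; this is the paper's argument for $d\ge4$. For $d=3$, however, weights up to $\tfrac{2}{3}$ survive even this choice (in the example above $|c_{\min}|=c_{\max}$). Moreover, the only homogeneous isolated singularities on a cubic that is not a cone are ordinary double points, so no weight argument can work. The paper handles $d=3$ separately: it determines which monomials $x_1^2x_i$, $x_1x_ix_k$, $x_jx_{j'}x_{j''}$ can occur, given that $f|_{x_1=1}$ must have nondegenerate Hessian. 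Your proposal has nothing in its place.

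The rest is fine: the same-sign case giving a cone, and the bidegree analysis once all $c_j$ with $j\ne i$ coincide. It matches the paper's conclusion that $\nu_i$ is constant on $\mathrm{Supp}_{\mathbf{x}}f$.
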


\begin{proof}
We consider the case $d\gess4$ first. Since $f$ is a homogeneous polynomial of degree $d$, we have ${\rm Supp}_{\bf x}\one f\sst\ell'\one{}^{-1}(1)$ with $\ell'(\nu_1,\dots,\nu_n)\defs\msum_{i=1}^n\,\tfrac{1}{d}\one\nu_i$. For $i\ins[1,n]$ with $c_i\ne 0$, we get a linear function $\ell^{(i)}$ with rational coefficients on $\R^{n-1}$ such that $p_i({\rm Supp}_{\bf x}\one f)$ is contained in $(\ell^{(i)})^{-1}(1)$ by elimination, setting $\ell^{(i)}\defs\ell'\mi \tfrac{1}{dc_i}\ell$. Here $p_i\col\R^n\tos\R^{n-1}$ is the $i\one$th projection (omitting the $i\one$th component). We may assume $\max_jc_j\gess{-}\min_jc_j$, replacing the $c_j$ with $-c_j$ if necessary.
If $c_i\eq\max_j|c_j|$, then the coefficients $w^{(i)}_j$ of $\ell^{(i)}$ are all non-negative, and $w^{(i)}_j\less\tfrac{2}{d}\less\tfrac{1}{2}$ (since $d\gess4$), hence the polynomial $f^{(i)}\defs f|_{x_i=1}$ has a weighted homogeneous isolated singularity at 0 with weights $w^{(i)}_j$. These $w^{(i)}_j$ must be strictly positive for any $j\nes i$, since we get non-isolated singularities otherwise. The Milnor number $\mu$ is equal to 1 if and only if $w^{(i)}_j\eq\tfrac{1}{2}$ for any $j\nes i$. Note that the Milnor number and the spectral numbers $\al_1,\dots\al_{\mu}$ (which are indexed weakly increasingly) of a weighted homogeneous polynomial are determined by the weights, see \cite{St} (and \cite[Section 7]{wh}, \cite[Section 1.5]{JKSY}):
\begin{equation*}
\msum_{k=1}^{\mu}\,t^{\al_k}=\mprod_{j\ne i}\,(t\mi t^{w_j})/(t^{w_j}\mi1),\q\mu=\mprod_{j\ne i}\,(w_j^{-1}\mi1),
\end{equation*}
where the $w^{(i)}_j$ are denoted by $w_j$. We then see that the weights $w^{(i)}_j$ must be constant, since the weights in the homogeneous case with $\mu\nes1$ can be read from the $\al_k\mi\al_1$ for $k\ins[2,n]$.
\sk
We thus proved that the $c_j$ are independent of $j\nes i$. This implies the inclusion
\begin{equation*}
{\rm Supp}_{\bf x}\one f\subset\bl\{\nu\ins\N^n\mid\msum_j\,\nu_j\eq d,\,\nu_i\eq c\one\msum_{j\nes i}\,\nu_j\br\}
\end{equation*} 
for some $c\gess0$. Then $f$ is either divisible by $x_i$ or a polynomial of $x_j$ for $j\nes i$. Here the first case does not occur, since $Z$ has non-isolated singularities otherwise. So the assertion follows for $d\gess4$.
\sk
It remains to show the case $d\eq3$. (Note that the case $d\eq2$ follows from the classification theory of quadratic forms over $\C$, since these are classified only by the rank.)
We assume $Z$ is not a cone. Then all the singularities of $Z$ must be ordinary double points, since $d\eq3$. We may assume $c_1\eq\max_i|c_i|\eq1$ changing the order of variables and multiplying $\ell$ by a rational number. Then $f$ cannot contain a monomial of the form $x_1^2x_i$ for any $i\ins[1,n]$ (that is, the coefficient of this monomial in $f$ vanishes), since $2\pl c_i\eq0$. Hence $f^{(1)}$ has an singularity at the origin substituting $x_1\eq1$. Moreover we have
\begin{equation*}
c_i\less0\q\h{for any}\,\,\,i\ins[2,n]. 
\end{equation*}
Indeed, $f$ must contain a monomial of the form $x_1x_ix_k$ for some $k\ins[1,n]$ in order that the singularity of $f^{(1)}$ is an ordinary double point after substituting $x_1\eq1$. (Indeed, monomials of degree 3 can be neglected for the judgement of whether $\mu\eq1$ or not, since it is decided by the determinant of the matrix whose $(i,j)$-component is the value at 0 of $\dd_{x_i}\dd_{x_j}f^{(1)}$ for $i,j\ins[2,n]$.) This is however impossible if $c_i\sgt0$ (where $1\pl c_i\pl c_k\eq0$ and $|c_k|\less1$).
\sk
Since $f$ is irreducible, $f$ must contain a monomial of the form $x_jx_{j'}x_{j''}$ with $j,j',j''\ins[2,n]$ (otherwise $f$ is divisible by $x_1$). Here $c_j,c_{j'},c_{j''}$ must be 0, since they are nonpositive with $c_j\pl c_{j'}\pl c_{j''}\eq0$. In order that $f^{(1)}$ has an ordinary double point at 0, the polynomial $f$ must contain a monomial of the form $x_1x_jx_k$ for some $k\ins[1,n]$ as above. Here $c_k$ must be $-1$, since $c_j\eq0$ this time. This $k$ is unique and $c_i\eq0$ for any $i\nes1,k$, applying the above argument to $-\ell$. We may assume $k\eq 2$ changing the order of variables. We then see that $f\eq x_1x_2\one g\pl h$ with $g,h$ homogeneous polynomials of $x_i$ for $i\ins[3,n]$ with degree 1 and 3 respectively. However this cannot give an ordinary double point after substituting $x_1\eq1$ or $x_2\eq1$ (since $n\gess4$), where the homogeneous polynomial $h$ of degree 3 can be neglected for the judgement of whether $\mu\eq1$ or not. So the assertion follows also for $d\eq3$. This finishes the proof of Proposition~\ref{P4.1}.
\end{proof}

\begin{rem} \label{R4.2}
Let $b_f(s),b_{f^m}(s)$ be the Bernstein-Sato polynomials of $f$ and $f^m$ with $m\ins\Z_{>0}$. These are polynomials of lowest degrees (or generators of ideals) satisfying the equations
\begin{equation*}
b_f(s)f^s=P(s)f^{s+1},\q b_{f^m}(s)(f^m)^s=P^{(m)}(s)(f^m)^{s+1},
\end{equation*}
with $P(s),P^{(m)}(s)\ins\D_{\C^n,0}[s]$. The first equality implies for $k\ins\N$ that
\begin{equation*}
b_f(ms\pl k)f^{ms+k}=P(ms\pl k)f^{ms+k+1}.
\end{equation*}
We then get the divisibility
\begin{equation*}
\mprod_{k=0}^{m-1} b_f(ms\pl k)\in b_{f^m}(s)\one\Q[s].
\end{equation*}
On the other hand we see that
\begin{equation*}
b_{f^m}\bl(\tfrac{s}{m}\br)\in b_f(s)\one\Q[s].
\end{equation*}
These show for instance that the maximal root of $b_{f^m}(s)$ coincides with that of $b_f(s)$ divided by $m$. (This would be quite well known to specialists, see also \cite[Example 4.11]{Bu}.)
\end{rem}

\begin{rem} \label{R4.3}
For $j\ins[1,n]$, let $\eta_j$ be the exterior product of the $\ddd x_i$ for $i\nes j$ up to sign so that $\ddd x_i{\wedge}\one\eta_j\eq\de_{i,j}\one\om$ with $\om\defs\ddd x_1{\wedge}\cdots{\wedge}\one\ddd x_n$. The exterior product of $\ddd f$ with an $(n{-}1)$-form $\msum_{i,j}\,c_{i,j}x_i\eta_j$ for $c_{i,j}\ins\C$ (which induces the $E_0$-differential of the pole order spectral sequence) is equal to $\xi(f)\om$ with $\xi\defs\msum_{i,j}\,c_{i,j}x_i\dd_{x_j}$. The image of the $(n{-}1)$-form by the exterior derivative $\ddd$ (which induces the $E_1$-differential of the spectral sequence) is given by $\om$ multiplied by the trace $\msum_i\,c_{i,i}$, since $\dd_{x_j}(c_{i,j}x_i)\eq\de_{i,j}c_{i,j}$.
(The first relation is quite implicit in the calculations in \cite[Section 8]{wh}. The second seems to have been forgotten to consider while Propositions 8.1--2 were studied in that paper.)
\end{rem}

\begin{rem}\label{R4.4}
Let $h$ be a homogeneous polynomial of $n{-}1$ variables with degree $e$ having an isolated singularity at 0. Then the poles of the topological zeta function of $h$ are contained in the set $\bl\{-1,-\tfrac{n-1}{e}\br\}$. In the case $\tfrac{n-1}{e}\eq\tfrac{n}{d}$, we have $(d\mi e)n\eq d$, hence $d\gess n\gess4$ and $e\gess3$. Indeed, $d\mi e\eq\tfrac{d}{n}\less\tfrac{d}{4}$, and then $e\gess\tfrac{3d}{4}\gess3$. Note also that $\tfrac{n}{d}\eq\tfrac{n-1}{e}\eq\tfrac{1}{a}$ with $a\eq d\mi e$, and $\tfrac{e}{d}\eq\tfrac{n-1}{n}\gess\tfrac{3}{4}$ if $n\gess4$.
\end{rem}

\begin{rem}\label{R4.5}
If $f\eq\msum_{|\nu|=d'}\,a_{\nu}{\bf x}^{\nu}$ is a homogeneous polynomial of degree $d'$ having an isolated singularity at 0, then for any $i\ins[1,n]$, there is $j\ins[1,n]$ with $a_{\nu}\nes0$ for $\nu\eq (d'{-}1){\bf e}^{(i)}\pl{\bf e}^{(j)}$, where ${\bf e}^{(i)}\eq(e^{(i)}_1,\dots,e^{(i)}_n)\ins\N^n$ with $e^{(i)}_k\eq\de_{i,k}$. (Indeed, ${\bf x}^{(d'-1){\bf e}^{(i)}}$ must be contained in the Jacobian ideal $(\dd f)$ generated by the derivatives $\dd_{x_i}f$ for $i\ins[1,n]$.)
\end{rem}

\begin{rem}\label{R4.6}
Let $Z$ be a projective hypersurface defined by a homogeneous polynomial $f$ which is annihilated by a nonzero ``nilpotent" vector field $\xi$ of degree 0. By Jordan normal form we may assume $\xi\eq\msum_{i\in I}\,x_i\dd_{x_{i+1}}$ with $I\eq\{1,\dots,n\}\stm\{n'_1,\dots,n'_r\}$, where $n'_k\eq\msum_{j=1}^k\,n_j$ with $\{n_j\}$ a weakly increasing sequence of strictly positive integers for $j,k\ins[1,r]$ with $r\ins\Z_{>0}$ and $n'_r\eq n$. We calculate the minor determinants of size $2{\times}2$ of the matrix consisting of the coefficients of $\xi$ and the Euler vector field $\xi'\defs\msum_{i=1}^n\,x_i\dd_{x_i}$, which are logarithmic vector fields along $\{f\eq0\}$. These minor determinants must vanish on any {\it isolated\one} singular point of $\{f\eq0\}\sst\C^n$. Indeed, the vector field $\xi$ induces a nonvanishing vector field along $Z$ on the open subset of $\PP^{n-1}$ where some of the minor determinants is nonzero, since $[\xi',\xi]\eq0$, that is, the Lie derivative of $\xi$ by $\xi'$ vanishes. (One may use also the Frobenius theorem on the integrability of ``distributions".) We then see that any isolated singular point of $Z$ is contained in $\mcap_{i\in I}\{x_i\eq0\}$ by an increasing induction.
\end{rem}

\begin{rem}\label{R4.7}
We can extend \cite[Theorem 3.8]{bha} to the case $k\eq d$, where $m_1\eq3$ and $q\eq0$ in \cite[(3.8.1)]{bha}. Indeed, we have the inclusions
\begin{equation*}
F_{-n}\B_f\,({=}\,\OO_Y)\subset\D_Y[s]f^s\subset V^{>0}\B_f,
\end{equation*}
where $f^s$ is identified with $\de(f{-}t)$. These imply that
\begin{equation*}
F_{1-n}\Gr_V^1\B_f\subset\Gr_V^1\D_X[s]f^s=:G_0\Gr_V^1\B_f\subset\Gr_V^1\B_f,
\end{equation*}
where the Hodge filtration $F$ is shifted by 1 taking $\Gr_V^1$ (but not for $\Gr_V^0$), and $F$ is indexed as in the right $\D$-module case, see \cite{mhp}. Moreover, by \cite[(3.5.7)]{bha}, we have the isomorphism of filtered $\D_Y$-modules
\begin{equation*}
\Gr^W_{n+1}(\Gr_V^0\B_f,F)=(i_0)^{\D}_*\bl(\Gr^W_{n+1}H^{n-1}(F_{\!f,0},\C)_1,F\br).
\end{equation*}
Here $(i_0)^{\D}_*$ denotes the direct image as a filtered $\D$-module, and
$\Gr_V^0\B_f$ (whose induced filtration $F$ is unshifted) is the filtered $\D_Y$-module underlying $\varphi_{f,1}\R_{h,Y}[n{-}1]$, and is identified with $\Gr_V^1\B_f/{\rm Ker}\,N$ by $\dd_t$ (which shifts the filtration $F$ by 1). The weight filtration $W$ on $\Gr_V^0\B_f$ is the monodromy filtration shifted by $n$ so that $\Gr^W_{n+j}\Gr_V^0\B_f\eq0$ if $|j|\sgt1$. The assumed inequality then implies the non-vanishing of $F_{1-n}\Gr^W_{n+1}\Gr_V^0\B_f$ using the short exact sequence \cite[(3.6.1)]{bha}.
\sk
Since $G_{-1}\eq0$ in the notation of \cite[Section 1.1]{bha}, these are sufficient for the proof of the generalization of \cite[Theorem 3.8]{bha} to the case $k\eq d$. Indeed, the order of nilpotence of $N$ increases by 1, passing from $\Gr_V^0\B_f\eq\Gr_V^1\B_f/{\rm Ker}\,N$ to $\Gr_V^1\B_f$ by using the hard Lefschetz property and the coincidence of the $N$-primitive parts of the $W$-graded quotients of the unipotent monodromy parts of the nearby and vanishing cycle Hodge modules $\psi_{f,1}\R_Y[n{-}1]$, $\varphi_{f,1}\R_Y[n{-}1]$ (see for instance \cite[(2.2.5)]{KLS}, \cite[Remark 2.3a]{FPS}).
\end{rem}

The following will be needed in the proof of Theorem~\ref{T4}.

\begin{lem}\label{L4.8}
Assume $Z$ is reduced and has only homogeneous isolated singularities with $n\sgt3$. Assume there is a singular point of $Z$ defined analytic-locally by a homogeneous polynomial of degree $e\slt d$ with $\tfrac{n-1}{e}\eq\tfrac{n}{d}$. Then there is no nonzero ``nilpotent" vector field $\xi$ of degree $0$ annihilating $f$.
\end{lem}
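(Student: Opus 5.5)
We argue by contradiction: suppose $f$ is annihilated by a nonzero nilpotent vector field $\xi$ of degree $0$. The plan is to show that $\xi$ fixes the given singular point $p$, that $\xi$ then induces a nilpotent vector field annihilating the tangent cone of $Z$ at $p$, that this induced field must vanish, and finally that $\xi$ is then so special that $Z$ becomes a cone with vertex $p$. The last conclusion is incompatible with $e<d$.

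\emph{Step 1: $\xi$ fixes $p$.} The one-parameter group $\exp(t\xi)\subset GL_n(\C)$ preserves $f$, hence acts on $Z$ and on the finite set ${\rm Sing}\,Z$. Being connected, it fixes each point of ${\rm Sing}\,Z$, in particular $p$. (Alternatively, one can use Remark~\ref{R4.6}.) Change coordinates linearly so that $p\eq[0\col\cdots\col0\col1]$. The fixedness of $p$ means that the span $V$ of $x_1,\dots,x_{n-1}$ is $\xi$-stable, where $\xi$ is viewed as a derivation acting on linear forms. Since $\xi$ is nilpotent, we have $\xi(x_n)\eq\ell\pl c\one x_n$ with $\ell\ins V$ and $c\eq0$.

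\emph{Step 2: passing to the tangent cone.} Write $f\eq\msum_{k\ge e}x_n^{d-k}g_k$ with $g_k$ homogeneous of degree $k$ in $x_1,\dots,x_{n-1}$. Since the singularity at $p$ is homogeneous of degree $e$ and isolated, $g_e$ defines an isolated singularity in $\C^{n-1}$. By Remark~\ref{R4.4}, $\tfrac{n-1}{e}\eq\tfrac{n}{d}$ forces $e\gess3$. The field $\xi$ preserves the filtration by order in $x_1,\dots,x_{n-1}$. Taking the lowest-order part of $\xi(f)\eq0$ gives
\[
x_n^{d-e}\,\bar\xi(g_e)\eq0,
\]
where $\bar\xi$ is the nilpotent endomorphism induced on $V$. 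The term $(d{-}e)x_n^{d-e-1}\ell\,g_e$ has higher order, and the terms from $g_k$ with $k>e$ also have higher order. Hence $\bar\xi(g_e)\eq0$.

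\emph{Step 3 (main point): $\bar\xi\eq0$.} For a homogeneous polynomial of degree $e\gess3$ with an isolated singularity, the linear automorphism group is finite (Matsumura--Monsky). Its Lie algebra, namely the linear vector fields annihilating $g_e$, is therefore zero, so $\bar\xi\eq0$. This is the step where the most external input is needed. If one wants to stay elementary, I would instead try to combine Lemma~\ref{L1}-type Jordan-form arguments with Remark~\ref{R4.5}: in Jordan coordinates for $\bar\xi$, the monomials $x_i^{e-1}x_j$ must appear in $g_e$, and these are incompatible with annihilation by a nonzero nilpotent field.

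\emph{Step 4: $Z$ is a cone with vertex $p$.} With $\bar\xi\eq0$, we get $\xi(x_j)\eq0$ for $j<n$ and $\xi(x_n)\eq\ell$, so $\xi\eq\ell\,\dd_{x_n}$ with $\ell\nes0$. Then $\xi(f)\eq\ell\,\dd_{x_n}f\eq0$ implies $\dd_{x_n}f\eq0$. Thus $f$ does not depend on $x_n$, and its local equation at $p$ is $f$ itself, homogeneous of degree $d$. This contradicts the fact that the local equation at $p$ has multiplicity $e<d$, which proves the lemma.
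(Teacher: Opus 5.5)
Your proof is correct, and it takes a genuinely different route from the paper's.

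\textbf{The paper's route.} The paper relies on the classification \cite[Theorem 2.4]{dPW2} of hypersurfaces with isolated singularities whose equation is annihilated by a nilpotent field of degree $0$. Cases 4 and 21 ($d=3$, $n\geq5$) are excluded because Remark~\ref{R4.4} gives $d\geq n$. Case 3 is settled by \cite[Lemma 6.6]{dPW2}, which gives non-constant weights. Case 2, $\xi=x\partial_y+y\partial_z$, is done by hand: it uses Weitzenb\"ock's description $f=\sum p^ix^kg_{i,k}$ with $p=y^2-2xz$ \cite{We}, the inclusion ${\rm Sing}\,Z\subset\{x=y=0\}$ from Remark~\ref{R4.6}, and then Remark~\ref{R4.5} and a Taylor expansion in each subcase.

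\textbf{Your route.} You argue intrinsically, and each step checks out.
\begin{itemize}
\item The connected group $\exp(t\xi)$ fixes the finitely many singular points. At $p$ this gives $\xi=\bar\xi+\ell\,\partial_{x_n}$ with $\bar\xi$ linear in $x_1,\dots,x_{n-1}$.
\item Taking the lowest-order part of $\xi(f)=0$ gives $\bar\xi(g_e)=0$.
\item $g_e$ is the tangent cone of a germ analytically equivalent to a homogeneous isolated singularity. So up to a linear change and a constant it is itself such a singularity, of degree $e\geq3$ by Remark~\ref{R4.4}. Hence $\bar\xi=0$.
\item Then $\xi=\ell\,\partial_{x_n}$ with $\ell\neq0$, which forces $\partial_{x_n}f=0$. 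This contradicts $g_e\neq0$ with $e<d$.
\end{itemize}

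\textbf{What each route buys.} Yours avoids the classification and the case analysis entirely. It also uses the hypothesis $\frac{n-1}{e}=\frac{n}{d}$ only through $3\leq e<d$, so it proves a more general statement. The paper's route depends on heavy external input, but it is tied to the finer structural information in \cite{dPW2} about which configurations actually occur. That extra information is not needed for this lemma.

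\textbf{A note on Step 3.} The Jordan-form/Remark~\ref{R4.5} alternative you sketch is not an argument as it stands, but you do not need it. If you want to avoid Matsumura--Monsky, there is a short elementary replacement:
\begin{itemize}
\item Since $g_e$ has an isolated singularity, $\partial_1g_e,\dots,\partial_{n-1}g_e$ is a regular sequence.
\item So every syzygy among them is a combination of Koszul syzygies, whose coefficients have degree $e-1\geq2$.
\item The syzygy $(\bar\xi(x_1),\dots,\bar\xi(x_{n-1}))$ coming from $\bar\xi(g_e)=0$ has linear coefficients, so it must vanish.
\end{itemize}
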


\begin{proof}
Assume there is a nonzero ``nilpotent" vector field $\xi$ of degree 0 annihilating $f$. In \cite[Theorem 2.4]{dPW2}, hypersurfaces of $\PP^{n-1}$ having only isolated singularities and whose defining polynomial $f$ is annihilated by a ``nilpotent" vector field $\xi$ are classified. We have only four Cases: 2 ($d\gess3$, $n\gess3$), 3 ($d\eq4$, $n\gess4$), 4 ($d\eq3$, $n\gess5$), and 21 ($d\eq3$, $n\gess5$), depending on the types of Jordan blocks (in the notation of that paper). By Remark~\ref{R4.4}, Cases 4 and 21 are excluded (since $n\sgt d$), and in Case 3, the assertion follows from \cite[Lemma 6.6]{dPW2}, where it is shown that the singularities are semi-weighted-homogeneous with {\it non-constant\one} weights.
\sk
So it remains to consider Case 2, where $f$ is annihilated by $\xi\eq x\dd_y\pl y\dd_z$ with $n\gess4$. Put $p\defs y^2\mi2xz$. By \cite{We} (see also \cite[Lemma 4.1]{dPW2} and \cite[Proposition 8.1]{wh}) we have
\begin{equation*}
f\eq\msum_{i=0}^{[d/2]}\one\msum_{k=0}^{d-2i}\,p^ix^kg_{i,k}\q\h{with}\q g_{i,k}\ins\C[v_4,\dots,v_n]^{d-2i-k},
\end{equation*}
where $x,y,z,v_4,\dots,v_n$ are coordinates of $\C^n$ and $\C[v_4,\dots,v_n]^m$ denotes the vector space consisting of homogeneous polynomials of degree $m$ (including 0) for $m\ins\N$. (This is easily reduced to the case $n\eq3$, since the polynomial ring is the tensor product of $\C[x,y,z]$ with $\C[v_4,\dots,v_n]$ over $\C$, and monomials are linearly independent in the latter polynomial ring.)
\sk
The singular locus of $Z\sst\PP^{n-1}$ is contained in $\{x\eq y\eq0\}$, see Remark~\ref{R4.6}. Assume that a homogeneous isolated singularity of local degree $e$ with $\tfrac{n-1}{e}\eq\tfrac{n}{d}$ is contained in $\{x\eq y\eq z\eq0\}$. Then we must have $d\eq n\eq4$ applying Remark~\ref{R4.5} to $z$. Indeed, $\tfrac{e}{d}\gess\tfrac{3}{4}$ by Remark~\ref{R4.4}, and we get $\nu_3\less\tfrac{d}{2}$ for any $\nu\ins\N^n$ with $a_{\nu}\nes0$ in view of the definition $p\defs y^2\mi2xz$ (since $z$ appears {\it only here\one}), where $f\eq\msum_{\nu}\,a_{\nu}{\bf v}^{\nu}$ with ${\bf v}\eq(v_1,\dots,v_n)$ and $v_1\eq x$, $v_2\eq y$, $v_3\eq z$. The only possible case here is the one with $d\eq n\eq4$ and $\tfrac{e}{d}\eq\tfrac{3}{4}$. However, we get a contradiction applying Remark~\ref{R4.5} to $z$ again in view of the definition of $p$ (that is, $p^2\eq 4\one x^2z^2\pl\cdots$).
\sk
The homogeneous singular point with $\tfrac{n-1}{e}\eq\tfrac{n}{d}$ is thus not contained in $\{z\eq0\}$. Let $[0\col0\col1\col c_4\col\cdots\col c_n]$ be its projective coordinates with $c_i\ins\C$ for $i\ins[4,n]$. Set
\begin{equation*}
h\defs f|_{z=1}\eq\msum_{i=0}^{[d/2]}\one\msum_{k=0}^{d-2i}\,p'{}^ix^kg_{i,k}\q\h{with}\q p'\defs y^2\mi2x.
\end{equation*}
Assume $(c_4,\dots,c_n)\eq0$. Applying Remark~\ref{R4.5} to $y$ and calculating powers of $p'$, we see that $g_{\frac{d}{2},0}\nes0$ with $d$ even, since the monomial $xy^{d-2}$ (which is contained in $p'{}^{d/2}$ and in $h$) is the only way to satisfy the conclusion of Remark~\ref{R4.5} for $y$ with $d'\eq e\slt d$, where the latter remark is applied to $h_e$ with $h\eq\msum_{k\ges e}\,h_k$ the decomposition of $h$ by homogeneous polynomials $h_k$ of degree $k$ (which corresponds to the decomposition of $f$ by the degree of $z$, that is, $f\eq\msum_{k\ges e}\,h_kz^{d-k}$). However, this is impossible, since $e\eq d{-}1$ and $x^2y^{d-4}$ is also contained in $p'{}^{d/2}$ and in $h$ (using $d\gess4$).
\sk
Assume finally $(c_4,\dots,c_n)\nes0$. We may assume $c_i\eq0$ for any $i\gess5$ changing $\C$-linearly the coordinates $v_4,\dots,v_n$. Set ${\bf v}'\defs(x,y,v_5,\dots,v_n)$. We have $g_{i,k}|_{{\bf v}'=0}\eq b_{i,k}\one v_4^{d-2i-k}$ with $b_{i,k}\ins\C$, and
\begin{equation*}
\begin{aligned}
\dd_xh|_{{\bf v}'=0}&=({-}2b_{1,0}\pl b_{0,1}v_4)v_4^{d-2},\\
\dd_yh|_{{\bf v}'=0}&=0,\\
\dd_{v_4}h|_{{\bf v}'=0}&=d\,b_{0,0}\one v_4^{d-1},\\
\dd_{v_i}h|_{{\bf v}'=0}&=b'_i\one v_4^{d-1}\q\h{for}\,\,\,i\ins[5,n],\\
\end{aligned}
\end{equation*}
where $b'_i\ins\C$ for $i\ins[5,n]$.
Since $h$ has an {\it isolated\one} singularity {\it not\one} at the origin, these imply that $b_{0,0}\eq0$, $b_{1,0}\one b_{0,1}\nes0$, $2b_{1,0}\eq b_{0,1}c_4$, and $b'_i\eq 0$ for $i\ins[5,n]$. We then get that
\begin{equation*}
(\dd_y\dd_yh)|_{{\bf v}'=0,\,v_4=c_4}=2b_{1,0}\one c_4^{d-2}\nes 0.
\end{equation*}
Considering the {\it Taylor expansion\one} of $h$ at $(0,0,c_4,0,\dots,0)$, we see that $e$ must be 2, but this contradicts Remark~\ref{R4.4}. So Lemma~\ref{L4.8} follows.
\end{proof}

We now give a proof of Theorem~\ref{T4}.

\subsection{Case I} Assume $n\eq3$.
We may assume $Z$ is reduced by using Remarks~\ref{R1} and \ref{R4.2}, and moreover $d\gess3$, since the case $d\eq2$ is trivial.
It is enough to compare the order and multiplicity of $-\tfrac{3}{d}$ as a pole and a root of $Z^{\rm top}_{f,0}(s)$ and $b_{f,0}(s)$ respectively in view of the argument in Remark~\ref{R3.1} together with the definition of local zeta functions.
\sk
If $f$ is not annihilated by any vector field of degree 0, then the assertion follows from the arguments in the proof of \cite[Theorem 3.8]{bha}. The argument is non-trivial in the case $Z$ has a singular point whose minimal spectral number coincides with $\tfrac{3}{d}$, since this number is a pole of the local topological zeta function of a local defining function of $Z$ and the one for $f$ may have a pole of order 3 if $d\eq3$ and 2 otherwise along $s\eq{-}\tfrac{3}{d}$. However the non-existence of a vector field of degree 0 annihilating $f$ implies the vanishing of $\Gr_P^1H^1(F_{\!f},\C)_{\la}$ with $\la\defs e^{-2\pi\sqrt{-1}\one 3/d}$, where $F_{\!f}$ is the Milnor fiber and the pole order filtration $P$ on $H^1(F_{\!f},\C)$ coincides with the Hodge filtration $F$, see \cite[Proposition 2.2]{DiSt}. So we can apply \cite[Theorem 3.8]{bha} in the case $d\sgt3$. For $d\eq3$, we can apply a similar argument, see Remark~\ref{R4.7}.
\sk
If $f$ is extremely degenerated, the computation in Remark~\ref{R3.2} implies that the order of pole at $s\eq{-}\tfrac{3}{d}$ is at most 2, and the assumption of \cite[Theorem 3.8]{bha} is satisfied. Indeed, if $f$ is annihilated by a nonzero diagonal vector field of degree 0 whose trace is 0, then the poles of the local topological zeta functions of the {\it two\one} singular points coincide, see Remark~\ref{R3.2}. In the case $C$ has only one singular point, its minimal exponent is $\tfrac{2a+1}{a(a+1)}$ and $\tfrac{2a+1}{(a+1)^2}$ with $d\eq a\pl1$ and $a\pl2$ for $i\eq0$ and $1$ respectively.
\sk
By Lemma~\ref{L1} the remaining case consists of polynomials in \cite[Remark 8.5]{wh}, where the assertion follows from the computation there, since the minimal spectral number of any singular point of $Z$ does not coincide with $\tfrac{3}{d}$. So Theorem~\ref{T4} is shown in the case $n\eq3$.

\subsection{Case II} Assume $n\gess4$.
We may assume $Z$ is reduced by using Remarks~\ref{R1} and \ref{R4.2}, and moreover $Z$ has a homogeneous isolated singularity of local degree $e$ with $\tfrac{n-1}{e}\eq\tfrac{n}{d}$, since otherwise the assertion is easily verified employing Remark~\ref{R3.1}.
\sk
In the case there is no nonzero vector field of degree 0 annihilating $f$, the graded eigenspace $\Gr_P^{n-2}H^{n-2}(F_{\!f},\C)_{\la}$ vanishes with $\la\defs e^{-2\pi\sqrt{-1}\one n/d}$ by the pole order spectral sequence using Remark~\ref{R4.3}. The assertion then follows from \cite[Theorem 3.8]{bha} if $d\sgt n$, and we apply its slight extension if $d\eq n$, see Remark~\ref{R4.7}.
\sk
Assume now there is a nonzero vector field of degree 0 annihilating $f$. Its ``semisimple part" must vanish by Proposition~\ref{P4.1} and Lemma~\ref{L1}. So this must be a nonzero ``nilpotent" vector field, however this contradicts Lemma~\ref{L4.8}. 
This finishes the proof of Theorem~\ref{T4}.

\section{Proof of Lemma~\ref{L1}} \label{S5}

Lastly we present an entirely elementary proof of Lemma~\ref{L1} generalizing an argument in the proof of \cite[Proposition 8.1]{wh}, see also \cite[Lemma 3.11]{BV}.
\sk
Let $\xi\eq\msum_{i,j}\,c_{i,j}x_i\dd_{x_j}$ be a vector field of degree 0 annihilating a homogeneous polynomial $f\eq\msum_{\nu\in\N^n}\,a_{\nu}{\bf x}^{\nu}$ of degree $d$. We may assume the matrix $A\eq(c_{i,j})$ is in Jordan normal form, see \cite[Remark 8.4]{wh}. Set $\xi'\defs\msum_i\,c_{i,i}x_i\dd_{x_i}$, $\xi''\defs\xi\mi\xi'$. Note that $\xi''$ is a $\C$-linear combination of $x_j\dd_{x_{j+1}}$ for some $j\ins[1,n]$ with $\{j,j{+}1\}\sst I(\al')$ and $\al'\ins\C$, since $A$ is in Jordan normal form.
\sk
Put $\ell_{\xi'}(\nu)\defs\msum_i\,c_{i,i}\nu_i$ for $\nu\eq(\nu_1,\dots,\nu_n)\ins\N^n$. Set $I(\al)\defs\{i\,|\,c_{i,i}\eq\al\}$ for $\al\ins\C$. Put $\nu(\al)_i\eq\nu_i$ if $i\ins I(\al)$ and 0 otherwise. Then ${\bf x}^{\nu}\eq\mprod_{\al\in\C}\,{\bf x}^{\nu(\al)}$, that is, $\nu\eq\msum_{\al\in\C}\,\nu(\al)$. This is the decomposition by eigenvalues of $A$. We have
\begin{equation*}\ell_{\xi'}(\nu)\eq\msum_{\al\in\C}\,\al|\nu(\al)|\q\h{with}\q|\nu(\al)|\eq\msum_i\,\nu(\al)_i.
\end{equation*}
\sk
Set $\Xi_{\xi'}\defs\ell_{\xi'}^{-1}(0)\cap\Z^n$. We have to show that ${\rm Supp}_{\bf x}f\sst\Xi_{\xi'}$, that is $a_{\nu}\eq0$ if $\nu\,{\notin}\,\Xi_{\xi'}$. This can be shown by decreasing induction taking the graded pieces of an increasing  filtration $G$ on the vector space $\C[{\bf x}]^d$ (consisting of homogeneous polynomials of degree $d$) such that the action of the nilpotent part $\xi''$ can be neglected. This filtration can be defined so that $G_p$ is generated over $\C$ by monomials ${\bf x}^{\nu}$ with $\phi(\nu)\less p$ and $|\nu|\eq d$, where $\phi(\nu)$ is given for instance by $\msum_{i=1}^n\,(d{+}1)^i(\nu_i{+}1)$ on $\{\nu\ins\N^n\,|\,|\nu|\eq d\}$. Note that
\begin{equation*}
\xi'{\bf x}^{\nu}\eq\ell_{\xi'}(\nu){\bf x}^{\nu}\q\h{for}\,\,\,\,\nu\ins\N^n,
\end{equation*}
and the monomials ${\bf x}^{\nu}$ are $\C$-linearly independent in the polynomial ring $\C[{\bf x}]$. (This is the key point in the proof.) Here we can verify that for $j\ins[1,n]$ with $\{j,j{+}1\}\sst I(\al')$ and $\al'\ins\C$, the subset $\Xi_{\xi'}\sst\Z^n$ is {\it stable\one} by the self-map $\rho^{(j)}$ of $\Z^n$ defined by
\begin{equation*}
\rho^{(j)}(\nu)_i\eq\begin{cases}\nu_i\pl1&\h{if}\,\,\,i\eq j,\\ \nu_i\mi1&\h{if}\,\,\,i\eq j\pl1,\\ \nu_i&\h{otherwise,}\end{cases}
\end{equation*}
corresponding to $x_j\dd_{x_{j+1}}$ (where $\phi\bl(\rho^{(j)}(\nu)\br)\slt\phi(\nu)$ for $\nu\ins\N^n$), since the $|\nu(\al)|$ do not change by the $\rho^{(j)}$. This implies that the influence of the nilpotent part $\xi''$ is restricted inside $\msum_{\nu\in\Xi_{\xi'}\cap\N^n}\C{\bf x}^{\nu}$, since $A$ is in Jordan normal form. So Lemma~\ref{L1} follows.

\end{document}